\newcommand{\otk}{{\otimes_{\ku}}}
\newcommand{\Mo}{{\mathcal M}}
\newcommand{\nic}{{\mathfrak B}}
\newcommand{\qs}{{\mathfrak R}}
\newcommand{\Ss}{{\mathcal S}}
\newcommand{\ot}{{\otimes}}
\newcommand{\kc}{{\mathcal K}}
\newcommand{\Ac}{{\mathcal A}}
\newcommand{\ca}{{\mathcal C}}
\newcommand{\Do}{{\mathcal D}}
\newcommand{\Pc}{{\mathcal P}}
\newcommand{\YD}{{\mathcal YD}}
\newcommand{\op}{\rm{op}}
\newcommand{\ku}{{\Bbbk}}
\newcommand{\Na}{{\mathbb N}}
\newcommand{\uno}{{\mathbf 1}}
\newcommand{\id}{\mbox{\rm id\,}}
\newcommand\Rep{\operatorname{Rep}}
\newcommand\co{\operatorname{co}}
\newcommand{\End}{\operatorname{End}}
\newcommand{\gr}{\mbox{\rm gr\,}}
\renewcommand{\_}[1]{\mbox{$_{\left( #1 \right)}$}}
\theoremstyle{plain}
\numberwithin{equation}{section}
\newtheorem{teo}{Theorem}[section]
\newtheorem{lema}[teo]{Lemma}
\newtheorem{prop}[teo]{Proposition}
\newtheorem{claim}{Claim}[section]
\theoremstyle{definition}
\newtheorem{defi}[teo]{Definition}
  \newtheorem{exa}[teo]{Example}
\theoremstyle{remark}
\newtheorem{rmk}[teo]{Remark}
\def\pf{\begin{proof}}
\def\epf{\end{proof}}
\theoremstyle{remark}
\begin{document}

\title[Module categories over quantum linear
spaces]{Representations of tensor categories coming from quantum
linear spaces}
\author[Mombelli]{
Mart\'\i n Mombelli }
\thanks{This work was  supported by
 CONICET, Argentina}
\address{ Facultad de Matem\'atica, Astronom\'\i a y F\'\i sica,
Universidad Nacional de C\'ordoba, CIEM, Medina Allende s/n,
(5000) Ciudad Universitaria, C\'ordoba, Argentina}
\email{mombelli@mate.uncor.edu, martin10090@gmail.com
\newline \indent\emph{URL:}\/ http://www.mate.uncor.edu/\~{}mombelli}
\begin{abstract} Exact indecomposable module categories over the tensor category
of representations of Hopf algebras that are liftings of quantum
linear spaces are classified.
\end{abstract}

\subjclass[2000]{16W30, 18D10, 19D23}

\date{\today}
\maketitle

\section{Introduction}

Given a tensor category $\ca$, a very natural object to consider
is the family of its \emph{representations}, or \emph{module
categories}. A module category over a tensor category $\ca$ is an
Abelian category $\Mo$ equipped with an exact functor $\ca\times
\Mo\to \Mo$ subject to natural associativity and unity axioms. In
some sense the notion of module category over a tensor category is
the categorical version of the notion of module over an algebra.
In some works the concept of module category over the tensor
category of representations of a quantum group is treated as an
idea more closely related to the notion of \emph{quantum subgroup}
\cite{Oc}, \cite{KO}.

\medbreak

The language of module categories has proven to be a useful tool
in di\-fferent contexts, for example in the theory of fusion
categories, see \cite{ENO1}, \cite{ENO2}, in the theory of weak
Hopf algebras \cite{O1}, in describing some properties of
semisimple Hopf algebras \cite{N} and in relation with dynamical
twists over Hopf algebras \cite{M1} inspired by ideas of V.
Ostrik.

Despite the fact that the notion of module category seems very
general, it is implicitly present in diverse areas of mathematics
and mathematical physics such as subfactor theory \cite{BEK},
affine Hecke algebras \cite{BO}, extensions of vertex algebras
\cite{KO} and conformal field theory, see for example \cite{BFRS},
\cite{FS}, \cite{CS1}, \cite{CS2}.

\medbreak

In \cite{eo} Etingof and Ostrik propose a class of module
categories, called \emph{exact}, and as an interesting problem the
classification of such module categories over a given finite
tensor category. The first classification results  were obtained
in \cite{KO},  \cite{eo2}, where the authors classify semisimple
module categories over the semisimple part of the category of
representations of $U_q(sl(2))$ for a root of unity $q$, over the
category of corepresentations of $SL_q(2)$ in the case $q$ is not
a root of unity and over the fusion category obtained as a
semisimple subquotient of the same category in the case $q$ is a
root of unity. The main result in those papers is the
classification in terms of ADE type Dynkin diagrams, which can be
interpreted as a quantum analogue of the McKay's correspondence.
The classification for the category of corepresentations of
$SL_q(2)$ in the case $q$ is a root of unity was obtained later in
\cite{O3} where the results were quite similar as in the
semisimple case.

\medbreak

In the case when $\ca=\Rep(H)$ is the category of representations
of a finite-dimensional Hopf algebra $H$ the first results
obtained in the classification of module categories were when the
Hopf algebra $H=\ku G$ is the group algebra of a finite group $G$,
see \cite{O1}, and in the case when $H=\Do(G)$ is the Drinfeld's
double of a finite-group $G$, see \cite{O2}. Moreover, in
\emph{loc. cit}. the author classify semisimple module categories
over any group-theoretical fusion category. In \cite{eo} module
categories were classify in the case where $H=T_q$ is the Taft
Hopf algebra, and also for tensor categories of representations of
finite supergroups.

\smallbreak

In \cite{AM},  \cite{M2} the authors give the first steps towards
the understanding of exact module categories over the
representation categories of an arbitrary finite-dimensional Hopf
algebra. In \cite{M2} the author present a technique to classify
module categories over $\Rep(H)$ when $H$ is a finite-dimensional
pointed Hopf algebra inspired by the classification results
obtained in \cite{eo}. In particular a classification is obtained
when $H=\textbf{r}_q$ is the Radford Hopf algebra and when
$H=u_q(\mathfrak{sl}_2)$ is the Frobenius-Lusztig kernel
associated to $\mathfrak{sl}_2$.

\smallbreak

The main goal of this paper is the application of the technique
presented in \cite{M2} to classify exact indecomposable module
categories over representation categories of finite-dimensional
pointed Hopf algebras constructed from quantum linear spaces.

Namely, let $\Gamma$ be a finite Abelian group and $V$  a quantum
linear space in $ {}^{\ku \Gamma}_{\ku \Gamma} \mathcal{YD}$,
$U=\nic(V)\#\ku \Gamma$ the Hopf algebra obtained by bosonization
of the Nichols algebra $\nic(V)$ and $\ku \Gamma$. Then if $\Mo$
is an exact indecomposable module category over $\Rep(U)$  there
exists
\begin{itemize}
    \item a subgroup $F\subseteq \Gamma$,
    \item a normalized 2-cocycle $\psi\in H^2(F,\ku^{\times})$,
    \item a $\ku \Gamma$-subcomodule $W\subseteq V$ invariant
    under the action of $F$,
\item scalars $\xi=(\xi_{i})$, $\alpha=(\alpha_{ij})$ compatible
with $V$, $\psi$, and $F$,
\end{itemize}
such that $\Mo\simeq {}_{\Ac(W,F, \psi, \xi, \alpha)}\Mo$ is the
category of left modules over the left $U$-comodule algebra
$\Ac(W,F, \psi, \xi, \alpha)$ associated to these data. We also show that module categories ${}_{\Ac(W,F, \psi, \xi, \alpha)}\Mo$, ${}_{\Ac(W',F', \psi', \xi', \alpha')}\Mo$ are equivalent as module categories over $\Rep(U)$ if and only if $(W,F, \psi, \xi, \alpha)=(W',F', \psi', \xi', \alpha').$

\smallbreak

If $H$ is a lifting of $U$, that is a Hopf algebra such that the
associated graded Hopf algebra $\gr H$ is isomorphic to $U$, then
$H$ is a cocycle deformation of $U$, implying that the categories $\Rep(H^*)$ and $\Rep(U^*)$ are tensor equivalent.
Thus exact indecomposable module categories over $\Rep(H)$ are
described by the same data as above.

\medbreak

The organization of the paper is as follows. In Section \ref{qls}
we recall the definitions of quantum linear spaces and the
construction of Andruskiewitsch and Schneider of liftings over
quantum linear spaces. In Section \ref{modc} we recall the
definitions of exact module categories and the description of
module categories over finite-dimensional Hopf algebras.

In subsection \ref{techniq} we explain the technique developed in
\cite{M2} to describe exact indecomposable module categories over
$\Rep(H)$ where $H$ is a finite-dimensional pointed Hopf algebra.
The main result is stated as Theorem \ref{descomposition}.

In section \ref{mod-overqs} we present a family of module
categories constructed explicitly over the representation category
of a Hopf algebra constructed from bosonization of a quantum
linear space and a group algebra. Then in Theorem
\ref{clasification-qspaces} we show that any module category is
equivalent to one of this family. Proposition
\ref{generation-in-degree-1} is a key result to the proof of the
main result of this section. In subsection \ref{moritaeq} we prove that any two of those module categories are nonequivalent.

Finally, in section \ref{twist-corresp} we show an explicit
correspondence of comodule algebras over cocycle equivalent Hopf
algebras. Since any lifting of a quantum linear space is a cocycle deformation to the Hopf algebra constructed from this quantum
linear space, this is Proposition \ref{cocy-def}, the results obtained in  Section \ref{mod-overqs}
allows to describe also exact module categories over those
liftings.

\subsection*{Acknowledgments} The author thanks C\'esar
Galindo for pointing out some errors in a previous version of this paper and for some enjoyable and interesting conversations. He also thanks the referee for his constructive comments.

\subsection{Preliminaries and notation}

We shall denote by $\ku$ an algebraically closed field of
characteristic zero. All vector spaces, algebras and categories
will be considered over $\ku$. For any algebra $A$,  ${}_A\Mo$
will denote the category of finite-dimensional left $A$-modules.

\smallbreak

If $\Gamma$ is a finite Abelian group and $\psi\in
Z^2(\Gamma,\ku^{\times})$ is a 2-cocycle, we shall denote  by
$\psi_g$ the map defined by
$$\psi_g(h)=\psi(h,g)\psi(g,h)^{-1},$$
for any $g, h\in\Gamma$. Hereafter we shall assume that any
2-cocycle $\psi$ is normalized and satisfies $\psi(g^{-1},g)=1$
for all $g\in\Gamma$.

\medbreak

If $A$ is an $H$-comodule algebra via $\lambda:A\to H\otk A$, we
shall say that a (right) ideal $J$ is $H$-costable if
$\lambda(J)\subseteq H\otk J$. We shall say that $A$ is (right)
$H$-simple, if there is no nontrivial (right) ideal $H$-costable
in $A$.

\medbreak

If $H$ is a finite-dimensional Hopf algebra then $H_0\subseteq H_1
\subseteq \dots \subseteq H_m=H$ will denote the coradical
filtration. When $H_0\subseteq H$ is a Hopf subalgebra then the
associated graded algebra $\gr H$ is a coradically graded Hopf
algebra. If $(A, \lambda)$ is a left $H$-comodule algebra, the
coradical filtration on $H$ induces a filtration on $A$, given by
$A_n=\lambda^{-1}(H_n\otk A)$. This filtration is called the
\emph{Loewy series }on $A$.

\medbreak

Let $U=\oplus^m_{i=0} U(i)$ be a coradically graded Hopf algebra.
We shall say that a left $U$-comodule algebra $G$, with comodule
structure given by $\lambda:G\to U\otk G$, graded as an algebra
$G=\oplus^m_{i=0} G(i)$ is a \emph{graded comodule algebra} if for
each $0\leq n\leq m$
$$ \lambda(G(n))\subseteq \bigoplus^m_{i=0} U(i)\otk G(n-i).$$
A graded comodule algebra $G=\oplus^m_{i=0} G(i)$ is
\emph{Loewy-graded} if the Loewy series is given by
$G_n=\oplus^n_{i=0} G(i)$ for any $0\leq n\leq m$.

\medbreak

If $A$ is a left $H$-comodule algebra the  graded algebra $\gr A$
obtained from the Loewy series is a Loewy-graded left $\gr
H$-comodule algebra. For more details see \cite{M2}.

\medbreak

We shall need the following result. Let $U=\oplus^m_{i=0} U(i)$ be
a coradically graded Hopf algebra.
\begin{lema}\label{loewy=grad} Let $(A, \lambda)$ be a
left $U$-comodule algebra with an algebra filtration $A^0\subseteq
A^1 \subseteq \dots \subseteq A^m=A$ such that $A_0$ is semisimple
and
\begin{equation}\label{filtr-comod} \lambda(A^n)\subseteq \sum^n_{i=0}\, U(i) \otk
A^{n-i},\end{equation} and such that the graded algebra associated
to this filtration $\gr' A$ is Loewy-graded. Then the Loewy
filtration on $A$  is equal to this given filtration, that is
$A^n=A_n$ for all $n=0,\dots ,m$.
\end{lema}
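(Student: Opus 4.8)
The strategy is to show the two filtrations coincide by first establishing one inclusion by induction, and then using a dimension/comparison argument forced by the Loewy-graded hypothesis. Let me set up notation: write $F^\bullet$ for the given filtration $A^0\subseteq A^1\subseteq\cdots$, and $A_\bullet$ for the Loewy filtration $A_n=\lambda^{-1}(U_n\otk A)$, where $U_n=\bigoplus_{i=0}^n U(i)$ is the coradical filtration of the coradically graded Hopf algebra $U$.

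\textbf{Step 1: the easy inclusion.} The condition \eqref{filtr-comod} says $\lambda(A^n)\subseteq\sum_{i=0}^n U(i)\otk A^{n-i}\subseteq U_n\otk A$, so by definition of the Loewy filtration $A^n\subseteq A_n$ for all $n$. Thus $F^\bullet$ refines the Loewy filtration in the sense that $A^n\subseteq A_n$; it remains to prove the reverse inclusion.

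\textbf{Step 2: base case.} For $n=0$: $A^0\subseteq A_0$ from Step 1. Conversely $A_0=\lambda^{-1}(U(0)\otk A)=\lambda^{-1}(U_0\otk A)$ is the coinvariants-type piece, and it is semisimple by hypothesis. I would argue that $A_0$ is contained in $A^0$ by noting that $\gr' A$ in degree zero is $A^0$ (which is semisimple, being a subalgebra... actually $A^0=F^0$ is assumed semisimple only via "$A_0$ is semisimple" — here one must be slightly careful and may instead deduce $A^0=A_0$ directly from semisimplicity of $A_0$ together with $A^0\subseteq A_0$ and the fact that, modulo the Jacobson radical, both agree because the Loewy-graded hypothesis forces $(\gr' A)(0)$ to be the semisimple quotient $A/\rad A$; since $A^0$ surjects onto $(\gr' A)(0)=A_0/(A_0\cap A^{-1})=A_0$ and $A^0\subseteq A_0$, we get $A^0=A_0$). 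This identification of the degree-zero part with the coradical piece is where the "$A_0$ semisimple" assumption is used.

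\textbf{Step 3: inductive step via the graded comparison.} Assume $A^{k}=A_k$ for all $k<n$. Consider the quotient map $A\to \gr' A$ and the general fact (stated in the excerpt, referencing \cite{M2}) that $\gr A$ from the Loewy series is a Loewy-graded $\gr U$-comodule algebra; likewise the hypothesis says $\gr' A$ is Loewy-graded. The key point is that condition \eqref{filtr-comod} makes the identity map $A\to A$ filtered with respect to $F^\bullet\hookrightarrow A_\bullet$, inducing a surjective morphism of graded comodule algebras $\gr' A\to\gr A$ (surjective because $A^n\to A_n/A_{n-1}$ composed appropriately covers $\gr A$ in each degree once we know $A^{n-1}=A_{n-1}$). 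Now in degree $n$: $(\gr' A)(n)=A^n/A^{n-1}=A^n/A_{n-1}$ maps onto $(\gr A)(n)=A_n/A_{n-1}$. Since this map is induced by the inclusion $A^n\hookrightarrow A_n$, it is also injective, hence an isomorphism, giving $A^n/A_{n-1}=A_n/A_{n-1}$ and therefore $A^n=A_n$.

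\textbf{Main obstacle.} The delicate point is Step 2 / the interface between the abstract "Loewy-graded" bookkeeping and the honest coradical filtration: one must check that the Loewy series $A_\bullt$ really is the filtration induced by the coradical filtration of $U$ (true by definition) \emph{and} that the hypothesis "$\gr' A$ Loewy-graded" forces the degree-$n$ piece of $\gr' A$ to be exactly $A_n/A_{n-1}$ rather than something larger — i.e. that no "collapsing" occurs. This is precisely what rules out $A^n\subsetneq A_n$. Concretely the argument is: $\gr' A$ Loewy-graded means its own Loewy series is $\bigoplus_{i\le n}(\gr' A)(i)$; pushing this through the surjection $\gr' A\to\gr A$ and using that surjections of comodule algebras are compatible with Loewy series, one gets that the induced map is an isomorphism in each degree by a downward count, forcing $A^n=A_n$ for every $n$. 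I would write this count out carefully, as it is the crux; everything else is formal.

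Note: in the above "$A_\bullt$" should read $A_\bullet$ — let me restate the obstacle cleanly: the hard part is verifying that the Loewy-graded hypothesis on $\gr' A$, combined with the general principle that the Loewy filtration of a comodule algebra behaves well under filtered morphisms, upgrades the a priori inclusion $A^n\subseteq A_n$ to an equality; this is the place where a genuine (if short) argument, rather than unwinding definitions, is required.
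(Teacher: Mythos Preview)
The paper's own proof is the single word ``Straightforward,'' so there is no detailed argument to compare against; I evaluate your plan on its merits.

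Step 1 is correct and is the trivial direction. The problem is Step 3: your claim that the filtered identity induces a \emph{surjective} map $\gr' A\to\gr A$ is circular. In degree $n$ this map is $A^n/A^{n-1}\to A_n/A_{n-1}$, and surjectivity amounts to $A_n\subseteq A^n+A_{n-1}$; under the inductive hypothesis $A_{n-1}=A^{n-1}\subseteq A^n$ this is exactly $A_n\subseteq A^n$, which is what you are trying to prove. Nor is injectivity of the degree-$n$ component clear without that same inductive hypothesis, so a global dimension count does not close the gap either. You correctly flag in your ``Main obstacle'' that the Loewy-graded hypothesis on $\gr' A$ is the crux, but the comparison map $\gr' A\to\gr A$ is the wrong vehicle for exploiting it.

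The direct argument bypasses that map. Take $a\in A_n$ and let $k$ be minimal with $a\in A^k$; suppose $k>n$. From \eqref{filtr-comod} and $\lambda(a)\in U_n\otk A$ one gets
\[
\lambda(a)\in \Big(\sum_{i=0}^{k}U(i)\otk A^{k-i}\Big)\cap(U_n\otk A)=\sum_{i=0}^{n}U(i)\otk A^{k-i},
\]
so the induced coaction on $\gr' A$ sends the nonzero class $\bar a\in(\gr' A)(k)$ into $U_n\otk\gr' A$. Hence $\bar a$ lies in the $n$-th term of the Loewy series of $\gr' A$, which by the Loewy-graded hypothesis equals $\bigoplus_{i\le n}(\gr' A)(i)$; this meets $(\gr' A)(k)$ trivially since $k>n$, a contradiction. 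This needs no induction, so your Step 2 becomes unnecessary as well.
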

\pf Straightforward. \epf

We shall need the following important theorem due to Skryabin. The statement does not appear explicitly in \cite{Sk} but is contained in  the proof of \cite[Theorem 3.5]{Sk}. Let $H$ be a finite dimensional Hopf algebra.

\begin{teo}\label{freeness} If $A$ is a finite dimensional $H$-simple left $H$-comodule algebra and  $M \in {}^H\Mo_A$, then there exists $t\in \Na$ such that $M^t$
is a free $A$-module.\qed
\end{teo}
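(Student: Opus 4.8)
This is the finite dimensional case of \cite[Theorem 3.5]{Sk}, so the plan is to follow the strategy of that proof. It splits into two movements: first I would show that every object of ${}^H\Mo_A$ is projective as a right $A$-module, and then that over the finite dimensional algebra $A$ such a projective becomes free after passing to a suitable finite power.

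For the first movement, observe that $A$ itself is an object of ${}^H\Mo_A$, and that for a finite dimensional left $H$-comodule $V$ the space $V\otimes A$ — with diagonal coaction $v\otimes a\mapsto v_{(-1)}a_{(-1)}\otimes(v_{(0)}\otimes a_{(0)})$ and right $A$-action on the second tensorand — is an object of ${}^H\Mo_A$ whose underlying right $A$-module is free of rank $\dim_\ku V$; call these the cofree objects. Given $M\in{}^H\Mo_A$, write $V_M$ for $M$ regarded only as an $H$-comodule; then the multiplication map $\mu\colon V_M\otimes A\to M$, $m\otimes a\mapsto m\cdot a$, is an epimorphism in ${}^H\Mo_A$ from a cofree, hence $A$-free, object onto $M$. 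The crucial claim — and the step I expect to be the main obstacle — is that $\mu$ splits in ${}^H\Mo_A$; this is exactly where the $H$-simplicity of $A$ must be used, and it is the principal technical input of \cite{Sk}: one produces an $H$-colinear and right $A$-linear section of $\mu$ by an averaging argument that works precisely because $A$ has no nontrivial $H$-costable right ideals (equivalently, $H$-simplicity makes $A$ injective in the category of relative Hopf modules). Granting this, $M$ is a direct summand of $V_M\otimes A$ in ${}^H\Mo_A$, hence a direct summand of $A^{\oplus \dim_\ku M}$ as a right $A$-module, so $M$ is finitely generated projective over $A$. (One also obtains in passing that $A$ is self-injective, i.e. quasi-Frobenius.)

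For the second movement, $A$ is finite dimensional hence semiperfect; I would fix a full set $P_1,\dots,P_r$ of indecomposable projective right $A$-modules and write $A\cong\bigoplus_i P_i^{\,d_i}$ and $M\cong\bigoplus_i P_i^{\,n_i}$ as right $A$-modules, which we may do since $M$ is projective by the first movement. Since the existence of $s,t\in\Na$ with $M^t\cong A^s$ is equivalent to the vector $(n_i)_i$ being a rational multiple of $(d_i)_i$, it suffices to prove this proportionality. Here one checks that the annihilator in $A$ of a nonzero object of ${}^H\Mo_A$ is an $H$-costable two-sided ideal, hence zero, so that every nonzero $M$ is faithful over $A$, and then — the delicate point of \cite{Sk} — one tracks the coaction through the decomposition of $M$ to conclude that $M$ and $A$ are isotypic in the strong sense that $(n_i)_i$ is proportional to $(d_i)_i$. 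Choosing $t$ with $t\,n_i=s\,d_i$ for all $i$ then yields $M^t\cong A^s$, which is free, as desired. The two nontrivial points are the splitting of $\mu$ in the first movement and this proportionality of multiplicities in the second; everything else is formal.
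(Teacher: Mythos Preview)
The paper does not prove this theorem: it is stated with a \qed, and the sentence introducing it attributes the result to Skryabin, noting that it is contained in the proof of \cite[Theorem 3.5]{Sk}. There is therefore no proof in the paper to compare your proposal against.

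Your outline is a correct reconstruction of the shape of Skryabin's argument, and you have accurately located the two nontrivial steps: the splitting of the multiplication map $\mu$ in the first movement, and the proportionality of multiplicity vectors in the second. At both of these points, however, you explicitly defer the substance back to \cite{Sk} rather than supplying an argument (``the principal technical input of \cite{Sk}'', ``the delicate point of \cite{Sk}''). So your proposal is in effect an expanded and annotated version of the paper's bare citation rather than an independent proof. The scaffolding is sound; if you wanted a self-contained argument you would still need to actually establish (i) that a finite-dimensional $H$-simple comodule algebra is quasi-Frobenius, which is what drives the splitting, and (ii) the proportionality claim in the second movement---faithfulness of $M$ over $A$ is a necessary ingredient but not by itself sufficient, and ``track the coaction through the decomposition'' is not yet an argument.
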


The following Lemma will be useful to distinguish equivalence classes of module categories. Let $\sigma: H\otimes H\to \ku$
be a Hopf 2-cocycle and $K$ be a left $H$-comodule algebra.
\begin{lema}\label{equivalencia-cocentral}
There is an equivalence of categories ${}^{H}\Mo_{K}\simeq
{}^{H^{\sigma}}\Mo_{K_{\sigma}}$.
In particular if $K\subseteq H$ is a left coideal
subalgebra, $Q=H/HK^+$ and $\sigma$ is
cocentral then the categories
${}^{H\!}\Mo_{K_{\sigma}}$, ${}^Q\Mo$ are equivalent.
\end{lema}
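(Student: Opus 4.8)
The plan is to construct the equivalence ${}^{H}\Mo_{K}\simeq {}^{H^{\sigma}}\Mo_{K_{\sigma}}$ explicitly on the level of objects, leaving the underlying vector space unchanged and only twisting the module structure. Recall that for a Hopf 2-cocycle $\sigma$ the algebra $H^\sigma$ has the same coalgebra structure as $H$ but multiplication $a\cdot_\sigma b=\sigma(a\_1,b\_1)\,a\_2b\_2\,\sigma^{-1}(a\_3,b\_3)$, and that a left $H$-comodule algebra $(K,\rho)$ gives rise to $K_\sigma$, which has the same $H$-comodule structure but multiplication twisted only on the left: $x\cdot_\sigma y=\sigma(x\_{-1},y\_{-1})\,x\_0 y\_0$, where I write $\rho(x)=x\_{-1}\otimes x\_0$. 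Since $\sigma$ is a cocycle on $H$ and $K$ is an $H$-comodule, this $K_\sigma$ is an associative algebra and a left $H^\sigma$-comodule algebra. First I would check that if $M\in{}^{H}\Mo_{K}$, i.e. $M$ is a left $H$-comodule and a right $K$-module with $\rho_M(m\cdot x)=m\_{-1}x\_{-1}\otimes m\_0\cdot x\_0$, then defining a new right action $m\cdot_\sigma x=\sigma(m\_{-1},x\_{-1})\,m\_0\cdot x\_0$ makes $M$ into an object of ${}^{H^\sigma}\Mo_{K_\sigma}$; associativity of the new action is exactly the 2-cocycle identity for $\sigma$ together with coassociativity, and the Hopf-bimodule compatibility with the (unchanged) $H^\sigma$-comodule structure follows formally. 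A morphism in ${}^{H}\Mo_{K}$ is automatically a morphism after twisting since it is $\ku$-linear and compatible with the comodule and module structures that are only being rescaled. The inverse functor is the analogous construction using $\sigma^{-1}$, and the two composites are the identity on the nose because twisting by $\sigma$ and then by $\sigma^{-1}$ multiplies the action by $\sigma(m\_{-1},x\_{-1})\sigma^{-1}(m\_{-2},x\_{-2})=\varepsilon(m\_{-1})\varepsilon(x\_{-1})$, which collapses to the original action.

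For the second assertion, assume $K\subseteq H$ is a left coideal subalgebra, put $Q=H/HK^+$ with $K^+=K\cap\Ker\varepsilon$, and assume $\sigma$ is cocentral, meaning $\sigma(a\_1,b\_1)\,a\_2\otimes b\_2=\sigma(a\_2,b\_2)\,a\_1\otimes b\_1$ for all $a,b\in H$. The key observation is that under cocentrality the twisted multiplication on $H^\sigma$ restricted to $K$ simplifies: on the comodule $K$ (which coacts by $\Delta|_K$ since $K$ is a left coideal subalgebra) the left-twisted product $x\cdot_\sigma y=\sigma(x\_1,y\_1)x\_2y\_2$ — but here one must be careful, the relevant product on $K_\sigma$ uses only the left twist, and cocentrality lets us move the scalar past. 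In fact the standard fact (see e.g. the treatment of cocentral cocycles) is that when $\sigma$ is cocentral, $K_\sigma\cong K$ as algebras, or at least the equivalence ${}^{H^\sigma}\Mo_{K_\sigma}\simeq{}^{H^\sigma}\Mo_{K}$ holds; combined with the first part this gives ${}^{H}\Mo_{K}\simeq{}^{H^\sigma}\Mo_{K}$. It remains to identify ${}^{H^\sigma}\Mo_{K}$ with ${}^Q\Mo$: since $H^\sigma$ and $H$ share the same coalgebra, $K$ is still a left coideal subalgebra of $H^\sigma$, one still has $Q=H^\sigma/H^\sigma K^+$ as a quotient coalgebra, and the general equivalence ${}^{H}\Mo_{K}\simeq{}^{Q}\Mo$ for a coideal subalgebra $K$ (realizing relative Hopf modules as $Q$-comodules via the Schneider-type structure theorem, using that $H$ is faithfully flat over $K$) applies verbatim to $H^\sigma$, since faithful flatness and the cotensor description depend only on the coalgebra structure and the subalgebra, both unchanged by the cocycle twist.

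I expect the main obstacle to be the second step: verifying carefully that cocentrality of $\sigma$ forces $K_\sigma$ to be isomorphic to $K$ (not merely twisted by the restricted cocycle, which need not be trivial), and pinning down exactly which structure theorem for $K$-relative Hopf modules is being invoked for ${}^{Q}\Mo\simeq{}^{H}\Mo_{K}$ and that its hypotheses (finite dimensionality of $H$, hence faithful flatness of $H$ over the left coideal subalgebra $K$) are in force here. The first part is a routine but slightly lengthy bookkeeping exercise with Sweedler notation, and I would present it compactly by noting that everything reduces to the 2-cocycle and counit identities; the cocentrality manipulation in the second part is the place where a genuine idea — rather than formal symbol-pushing — is needed.
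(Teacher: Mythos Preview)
Your treatment of the first assertion is correct and is exactly the standard argument: twist the right $K$-action by $\sigma$, leave the comodule structure untouched, and check that the cocycle identity gives associativity while the inverse cocycle undoes the twist. The paper does not supply its own argument here (it simply cites \cite[Lemma~2.1]{M2}), so there is nothing to compare on that front.

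The second assertion, however, does not close as written. Two points:

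\emph{(i)} The claim that cocentrality forces $K_\sigma\cong K$ as algebras is false in general. Take $H=\ku\Gamma$ for a finite abelian group $\Gamma$ and $K=\ku F$ for a subgroup $F$: every $2$-cocycle on $\Gamma$ is cocentral (since $H$ is cocommutative), yet $K_\sigma=\ku_\psi F$ is the twisted group algebra, which is typically not isomorphic to $\ku F$. This is precisely the situation in which the lemma is applied later in the paper (see the proof of Theorem~\ref{morita-equivariant}), so the distinction matters.

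\emph{(ii)} More seriously, your chain of equivalences ends at ${}^{H}\Mo_{K}\simeq{}^{H^{\sigma}}\Mo_{K}\simeq{}^{Q}\Mo$, whereas the lemma asserts ${}^{H}\Mo_{K_\sigma}\simeq{}^{Q}\Mo$. You never connect ${}^{H}\Mo_{K_\sigma}$ to the rest. The fix is to use the one consequence of cocentrality you omitted: if $\sigma$ is cocentral then $H^{\sigma}=H$ as Hopf algebras, since the twisted product $\sigma\ast m\ast\sigma^{-1}$ equals $m$ once $\sigma$ commutes (in convolution) with every map $H\otimes H\to H$. Granting this, the first part of the lemma reads ${}^{H}\Mo_{K}\simeq{}^{H^{\sigma}}\Mo_{K_\sigma}={}^{H}\Mo_{K_\sigma}$, and composing with the Schneider equivalence ${}^{H}\Mo_{K}\simeq{}^{Q}\Mo$ for the left coideal subalgebra $K\subseteq H$ gives exactly the statement. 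No comparison of $K_\sigma$ with $K$ is needed at all.
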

\pf See \cite[Lemma 2.1]{M2}. \epf

\section{Liftings of quantum linear spaces}\label{qls}

In this section we recall some results from \cite{AS1}. More
precisely, we shall recall the definition of a certain family of
finite-dimensional Hopf algebras such that the associated graded
Hopf algebras are the bosonization of a quantum linear space and a
group algebra of an Abelian group.

\subsection{Quantum linear spaces}

We shall use the notation from \cite{AS1}, \cite{AS2}. Let
$\theta\in \Na$ and $\Gamma$ be a finite Abelian group. A
\emph{datum for a quantum linear space} consists of elements $g_1,
\dots, g_{\theta}\in \Gamma$, $\chi_1,
\dots,\chi_{\theta}\in\widehat{\Gamma}$ such that
\begin{align}\label{qls1} q_i=\chi_i(g_i)\neq 1, \text{ for all }
i,\\ \label{qls2} \chi_i(g_j)\chi_j(g_i)=1,  \text{ for all }
i\neq j.
\end{align}

Let us denote $q_{ij}=\chi_j(g_i)$ and for any $i$ let $N_i>1$
denote the order of $q_i$. Denote $V=V(g_1,\dots,
g_{\theta},\chi_1, \dots,\chi_{\theta})$ the Yetter-Drinfeld
module over $\ku \Gamma$ generated as a vector space by
$x_1,\dots, x_{\theta}$ with structure given by
\begin{align} \delta(x_i)=g_i\ot x_i, \; h\cdot x_i=\chi_i(h)\,
x_i,
\end{align}
for all $i=1,\dots, \theta$, $h\in \Gamma$. The associated Nichols
algebra $\nic(V)$ is the graded braided Hopf algebra generated by
elements $x_1,\dots, x_{\theta}$ subject to relations
\begin{align}\label{qls3} x_i^{N_i}=0, \quad x_i x_j= q_{ij}\; x_j
x_i \; \text{ if }  i\neq j.
\end{align}
This algebra is called the \emph{quantum linear space} associated
to $V$, or to $(g_1,\dots, g_{\theta},\chi_1,
\dots,\chi_{\theta})$ and it is denoted by $\qs=\qs(g_1,\dots,
g_{\theta},\chi_1, \dots,\chi_{\theta})$. The gradation on
$\qs=\oplus_n\, \qs(n)$ is given as follows. If $n\in \Na$ then
$$ \qs(n)=<\{x^{r_1}_1\dots x^{r_{\theta}}_{\theta}: r_1+\dots+
r_{\theta}=n\}>_{\ku}$$

\begin{rmk}\label{whenN=2} The space $V$ decomposes as $V=\oplus^\theta_{i=1} V_{g_i}$, where $V_{g_i}=\{
v\in v: \delta(v)=g_i\ot v\}$ is the isotypic component of type
$g_i$. Since it can happen that for some $k\neq l$, $g_k=g_l$,
then $\dim V_{g_k}\geq 1$. If $\dim V_{g_k}>2$ for some $k$ then
there are at least two $g_i's$ equal to $g_k$. Using equation
\eqref{qls2} this implies that $q_k^2=1$, hence $N_k=2$.
\end{rmk}

\begin{rmk}\label{qcommutation} If $g, h\in \Gamma$ and $v\in
V_g$, $w\in V_h$ then there exists a scalar $q_{h,g}\in \ku$
that only depends on $g$ and $h$ such that $wv=q_{h,g}\, vw$.
Indeed it is enough to prove that if $x_i, x_k \in V_g$ and $x_j,
x_l\in V_h$ then $q_{ij}=q_{kl}$. Since $g=g_i=g_k$ then
$q_{ij}=q_{kj}$, and since $h=g_j=g_l$ then $q_{jk}=q_{lk}$. Using
that $q_{ij}q_{ji}=1$ we deduce that $q_{ij}=q_{kl}$.
\end{rmk}

Let us denote $U=\qs\#\ku\Gamma$ the Hopf algebra obtained by
bosonization. The Hopf algebra $U$ is coradically graded with
gradation given by $U=\oplus_n\, U(n)$, $U(n)=\qs(n)\# \ku\Gamma$,
see for example \cite[Lemma 3.4]{AS1}. Next we will describe a
family of Loewy-graded $U$-comodule algebras.

\begin{defi} If $W\subseteq V$ is a $\ku\Gamma$-subcomodule, we shall denote by
$\kc(W)$ the subalgebra of $\qs$ generated by elements $\{w: w\in
W\}$. Clearly $\kc(W)$ is a left coideal subalgebra of $U$.
\end{defi}

Let $F\subseteq \Gamma$ be a subgroup, $\psi\in
Z^2(F,\ku^{\times})$ a 2-cocycle and $W\subseteq V$  a
$\ku\Gamma$-subcomodule invariant under the action of $F$. Set
$\kc(W,\psi,F)=\kc(W)\otk\ku_{\psi} F$ with product and  left
$U$-comodule structure $\lambda:\kc(W,\psi,F)\to U\otk
\kc(W,\psi,F)$ given as follows. If $g\in G$, $w\in W_g$, $v,
v'\in W$ and $f, f'\in F$, then
$$ (v\ot f)(v'\ot f')=v f\cdot v' \ot \psi(f,f')\, ff',  $$
$$\lambda(w\ot f)= (w\#f)\ot  1\ot f + (1 \# gf)\ot w \ot f.$$
There is a natural inclusion of vector spaces $\kc(W,\psi,F)
\hookrightarrow U$. Using this inclusion the coaction $\lambda$
coincides with the coproduct of $U$. Clearly $\kc(W,1,F)$ is a coideal subalgebra of $U$. For any $n\in \Na$ set
$\kc(W,\psi,F)(n)=\kc(W,\psi,F)\cap U(n)$.

\begin{lema}\label{loewy-algebras} With the above given
gradation the algebra $\kc(W,\psi,F)$ is a Loewy-graded
$U$-comodule algebra.
\end{lema}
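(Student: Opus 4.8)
The plan is to verify the two defining properties in turn — that $\kc(W,\psi,F)$ is a graded $U$-comodule algebra, and that this grading coincides with the Loewy series — after first pinning down the homogeneous components explicitly. I would begin by noting that $\kc(W)$ is the subalgebra of $\qs$ generated by the homogeneous elements $W\subseteq\qs(1)$, hence is a graded subalgebra, so $\kc(W)=\bigoplus_{n\ge 0}\kc(W)(n)$ with $\kc(W)(n)=\kc(W)\cap\qs(n)$. Since $W$ is $F$-invariant and $\Gamma$ acts on $\qs$ by graded algebra automorphisms, each $\kc(W)(n)$ is stable under the action of $F$; therefore, under the natural inclusion of vector spaces $\kc(W,\psi,F)=\kc(W)\otk\ku_\psi F\hookrightarrow\qs\otk\ku\Gamma=U$ one obtains $\kc(W,\psi,F)(n)=\kc(W,\psi,F)\cap U(n)=\kc(W)(n)\otk\ku_\psi F$, so that $\kc(W,\psi,F)=\bigoplus_{n\ge 0}\kc(W,\psi,F)(n)$, and the product formula shows this is an algebra grading. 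Next, $\kc(W,\psi,F)$ is generated as an algebra by the elements $w\ot 1$ ($w\in W$), of degree $1$, and $1\ot f$ ($f\in F$), of degree $0$; for these generators the formula for $\lambda$ gives $\lambda(1\ot f)=(1\#f)\ot(1\ot f)$ and $\lambda(w\ot 1)=(w\#1)\ot(1\ot 1)+(1\#g)\ot(w\ot 1)$ for $w\in W_g$, both landing in the component of matching total degree of $U\otk\kc(W,\psi,F)$, the latter graded by $(U\otk\kc(W,\psi,F))_n=\bigoplus_{i+j=n}U(i)\otk\kc(W,\psi,F)(j)$. Since $\lambda$ is an algebra map and the target is a graded algebra, $\lambda$ is graded, that is $\lambda(\kc(W,\psi,F)(n))\subseteq\bigoplus_{i=0}^n U(i)\otk\kc(W,\psi,F)(n-i)$, which is precisely the condition that $\kc(W,\psi,F)$ be a graded $U$-comodule algebra.

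For the Loewy-graded property I would use that $U$ is coradically graded, so $U_n=\bigoplus_{i=0}^n U(i)$, together with the fact recorded above that, via the inclusion $\iota\colon\kc(W,\psi,F)\hookrightarrow U$, the coaction $\lambda$ is the restriction of $\Delta_U$, with $\Delta_U(\iota(\kc(W,\psi,F)))\subseteq U\otk\iota(\kc(W,\psi,F))$. Then $\kc(W,\psi,F)_n=\lambda^{-1}(U_n\otk\kc(W,\psi,F))=\iota^{-1}\bigl(\{\,u\in\iota(\kc(W,\psi,F)):\Delta_U(u)\in U_n\otk U\,\}\bigr)$. The key elementary fact is that $\{u\in U:\Delta_U(u)\in U_n\otk U\}=U_n$: the inclusion $\Delta_U(U_n)\subseteq U_n\otk U$ holds because the coradical filtration satisfies $\Delta_U(U_n)\subseteq\sum_{i+j=n}U_i\otk U_j$, and the reverse inclusion follows from the counit axiom $u=(\id\otk\epsilon_U)\Delta_U(u)$. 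Hence $\kc(W,\psi,F)_n=\iota^{-1}(U_n)=\bigoplus_{i=0}^n\kc(W,\psi,F)(i)$, the last equality because $\iota$ is injective and carries $\kc(W,\psi,F)(i)$ into $U(i)$; this is the Loewy-graded condition.

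The only genuinely delicate step is the reverse inclusion $\kc(W,\psi,F)_n\subseteq\bigoplus_{i\le n}\kc(W,\psi,F)(i)$ in the Loewy-graded claim — i.e.\ that the Loewy filtration is not strictly finer than the grading suggests — and this is exactly what the counit argument above delivers. I would emphasize that Lemma \ref{loewy=grad} cannot be invoked directly here: the algebra associated to the grading of $\kc(W,\psi,F)$ is $\kc(W,\psi,F)$ itself, so applying that lemma would presuppose the very conclusion one wants. Everything else is a routine unwinding of the definitions of graded comodule algebra and Loewy series.
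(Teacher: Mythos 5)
Your proof is correct and follows essentially the same route as the paper's: both arguments rest on the facts that, under the inclusion $\kc(W,\psi,F)\hookrightarrow U$, the coaction is the restriction of the coproduct of the coradically graded Hopf algebra $U$, and that a counit/projection argument identifies the homogeneous components and forces $\kc(W,\psi,F)_n=\oplus_{i\le n}\kc(W,\psi,F)(i)$. You merely spell out in more detail the step the paper dismisses with ``it is straightforward \dots since $U$ is coradically graded,'' and your remark that invoking Lemma \ref{loewy=grad} here would be circular is accurate.
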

\pf Let be $x\in \kc(W,\psi,F)$, then $x=\sum_i\, x_i$, where each
$x_i\in U(i)$. Since the coproduct of $U$ coincides with the
coaction $\lambda$ and $U$ is a graded Hopf algebra, then
$$\lambda(x_i)\in \bigoplus^i_{j=0}\; U(j)\otk U(i-j)
\cap U\otk \kc(W,\psi,F).$$ Applying $\epsilon$ to the first
tensorand we obtain that $$x_i=( \epsilon\ot\id)\lambda(x_i)\in
U(i)\cap \kc(W,\psi,F),$$ thus for any $i$, $x_i\in
\kc(W,\psi,F)(i)$, hence we conclude that $\kc(W,\psi,F)=\oplus_i
\kc(W,\psi,F)(i)$. It is straightforward to prove that this
gradation is an algebra gradation and since $U$ is coradically
graded then $\kc(W,\psi,F)$ is a Loewy-graded $U$-comodule
algebra. \epf
\subsection{Liftings of quantum linear spaces}\label{liftings}

Given a datum for a quantum linear space $\qs=\qs(g_1,\dots,
g_{\theta},\chi_1, \dots,\chi_{\theta})$ for the group $\Gamma$, a
\emph{compatible datum} for $\qs$ and $\Gamma$ is a pair
$\Do=(\mu, \lambda)$ where $\mu=(\mu_i)$, $\mu_i\in \{0,1\}$ for
$i=1\dots\theta$ and $\lambda=(\lambda_{ij})$ where
$\lambda_{ij}\in \ku$ for $1\leq i<j \leq \theta$, satisfying
\begin{enumerate}
    \item $\mu_i$ is arbitrary if $g^{N_i}_i\neq 1$ and
    $\chi^{N_i}_i=1$, and $\mu_i=0$ otherwise.
    \item $\lambda_{ij}$ is arbitrary if $g_ig_j\neq 1$ and
    $\chi_i\chi_j=1$, and $0$ otherwise.
\end{enumerate}

The algebra $\Ac(\Gamma,\qs,\Do)$ is generated by $\Gamma$ and
elements $a_i$, $i=1\dots\theta$ subject to relations
\begin{equation}\label{lifting-qs1} g a_i =\chi_i(g)\,  a_ig,\quad a^{N_i}_i=
\mu_i (1-g^{N_i}_i), \; i=1\dots\theta,
\end{equation}
\begin{equation}\label{lifting-qs1} a_i a_j=\chi_j(g_i)\, a_j
a_i+\lambda_{ij}(1-g_ig_j), 1\leq i<j \leq \theta.
\end{equation}

The following result is \cite[Lemma 5.1, Thm. 5.5]{AS1}.

\begin{teo} The algebra $\Ac(\Gamma,\qs,\Do)$ has a Hopf algebra
structure with coproduct determined by
$$\Delta(g)=g\ot g,\quad \Delta(a_i)=a_i\ot 1+g_i\ot a_i, $$
for any $g\in \Gamma$, $i=1,\dots,\theta$. It is a pointed Hopf
algebra with coradical $\ku\Gamma$ and the associated graded Hopf
algebra with respect to the coradical filtration $\gr
\Ac(\Gamma,\qs,\Do)$ is isomorphic to $\qs\# \ku\Gamma$.\qed
\end{teo}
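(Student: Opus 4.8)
The plan is to verify in sequence that the stated data defines a bialgebra, that this bialgebra is in fact a Hopf algebra, that it has the asserted finite dimension, and finally that its coradical is $\ku\Gamma$ with the indicated associated graded Hopf algebra. Throughout I write $A=\Ac(\Gamma,\qs,\Do)$.

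First I would check that $\Delta$ and $\epsilon$, defined on generators, extend to algebra maps $A\to A\ot A$ and $A\to\ku$; since $A$ is presented by generators and relations this reduces to verifying that they respect the defining relations. The commutation relations $ga_i=\chi_i(g)a_ig$ present no difficulty, the $g\in\Gamma$ being grouplike and the $a_i$ being $(1,g_i)$-skew-primitive. For the power relation the key input is the standard fact that if $a$ is a $(1,g)$-skew-primitive element that $q$-commutes with $g$ and $q$ has order $N$, then the quantum binomial coefficients $\binom{N}{k}_q$ with $0<k<N$ all vanish, so that $a^N$ is again $(1,g^N)$-skew-primitive; since $1-g_i^{N_i}$ is likewise $(1,g_i^{N_i})$-skew-primitive, one computes directly that $\Delta$ sends the relation $a_i^{N_i}=\mu_i(1-g_i^{N_i})$ to an identity in $A\ot A$, the hypothesis $\chi_i^{N_i}=1$ (when $\mu_i\neq 0$) being exactly what makes the two sides agree. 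A parallel, shorter computation shows that $a_ia_j-\chi_j(g_i)a_ja_i$ is $(1,g_ig_j)$-skew-primitive and that the quadratic relation is preserved, using $\chi_i\chi_j=1$ when $\lambda_{ij}\neq 0$. Coassociativity and counitality then hold on generators and propagate because $\Delta,\epsilon$ are algebra maps, so $A$ is a bialgebra; the antipode is given on generators by $S(g)=g^{-1}$, $S(a_i)=-g_i^{-1}a_i$, which I would check respects the relations (as an algebra anti-homomorphism) and is a two-sided convolution inverse of the identity, yielding the Hopf structure.

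Next I would pin down $\dim A$ by exhibiting the PBW-type spanning set $\{a_1^{r_1}\cdots a_\theta^{r_\theta}\,g:\ 0\le r_k<N_k,\ g\in\Gamma\}$ and proving its linear independence. Orienting the relations as rewriting rules, this is a Diamond Lemma argument: one checks that every overlap ambiguity — words of the shape $g\,a_i$ against the $a_i$-relations, $g\,a_i^{N_i}$, the cubic words $a_ia_ja_k$, and $a_i^{N_i}a_j$, $a_ia_j^{N_j}$ — is confluent. It is precisely here that the compatibility constraints on $\Do$ enter: for instance confluence of $g\,a_i^{N_i}$ comes down to $\chi_i(g)^{N_i}\mu_i(1-g_i^{N_i})=\mu_i(1-g_i^{N_i})$, which forces $\mu_i=0$ unless $g_i^{N_i}\neq 1$ and $\chi_i^{N_i}=1$, and similarly for the $\lambda_{ij}$. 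This yields $\dim A=|\Gamma|\,N_1\cdots N_\theta$.

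Finally, I would filter $A$ as an algebra by placing $\Gamma$ in degree $0$ and each $a_i$ in degree $1$, so that $A^n$ is the span of those basis monomials with $\sum_k r_k\le n$. In the associated graded algebra $\gr A$ with respect to this filtration the inhomogeneous terms $1-g_i^{N_i}$ and $1-g_ig_j$ fall into lower degree, so the relations \eqref{qls3} of the quantum linear space hold in $\gr A$ together with $g\bar a_i=\chi_i(g)\bar a_i g$; hence there is a surjective graded algebra map $\qs\#\ku\Gamma\twoheadrightarrow\gr A$, which is an isomorphism by the dimension count of the previous step and is moreover a coalgebra map, the induced coproduct on the degree-one generators being $\bar a_i\ot 1+\bar g_i\ot\bar a_i$. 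Thus $\gr A\cong\qs\#\ku\Gamma$ as graded Hopf algebras, so in particular $\gr A$ is coradically graded; since $A^0=\ku\Gamma$ is cosemisimple and $\Delta(A^n)\subseteq\sum_i A^i\otk A^{n-i}$, the argument of Lemma \ref{loewy=grad} then shows that this filtration is the coradical filtration of $A$, whence the coradical of $A$ is $\ku\Gamma$ and the associated graded Hopf algebra is $\qs\#\ku\Gamma$, as claimed. I expect the Diamond Lemma step to be the main obstacle: verifying confluence of all the overlap ambiguities is exactly where the precise form of the compatibility conditions on $\Do$ is indispensable, and it is the only part that is not routine manipulation of skew-primitives.
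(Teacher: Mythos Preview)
The paper itself gives no proof of this theorem: it is quoted from \cite[Lemma~5.1, Thm.~5.5]{AS1} as background and closed with a \qed. Your outline is the standard way to establish the result from scratch and is correct in its overall architecture: verify that $\Delta$ and $\epsilon$ descend through the relations using the quantum binomial formula, obtain a PBW basis via the Diamond Lemma, and then identify the degree filtration with the coradical filtration.

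Two small points of precision. First, the compatibility conditions $\chi_i^{N_i}=1$ and $\chi_i\chi_j=1$ (when $\mu_i,\lambda_{ij}\neq 0$) are not what make $\Delta$ respect the relations: once $a_i^{N_i}$ and $a_ia_j-q_{ij}a_ja_i$ are seen to be skew-primitive of the appropriate type, the coproducts of both sides of each relation agree automatically. Those conditions live entirely on the algebra side, exactly where you later place them, namely in resolving the Diamond Lemma overlaps such as $g\,a_i^{N_i}$ and $g\,a_ia_j$; so your first paragraph misattributes their role, though harmlessly. Second, Lemma~\ref{loewy=grad} in this paper concerns the Loewy filtration of a comodule algebra over a fixed coradically graded Hopf algebra and does not literally apply to identifying the coradical filtration of $A$ itself. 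What you actually need is the elementary coalgebra fact that any coalgebra filtration $\{A^n\}$ contains the coradical filtration termwise, together with the observation that $A^0=\ku\Gamma$ is cosemisimple and hence equals $A_0$; the equality $A^n=A_n$ then follows by the usual wedge induction. This is in the same spirit as Lemma~\ref{loewy=grad}, so the slip is one of citation rather than of substance.
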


\section{Representations of tensor categories}\label{modc}

We shall recall the basic definitions of exact module categories
over a tensor category and we shall describe the strategy to
classify exact module categories over the tensor category of
representations of a finite-dimensional pointed Hopf algebra.

\subsection{Exact module categories} Given $\ca=(\ca, \ot, a,
\uno)$ a tensor category a \emph{module category} over $\ca$ is an
abelian category $\Mo$ equipped with an exact bifunctor $\otimes:
\ca \times \Mo \to \Mo$ and natural associativity and unit
isomorphisms $m_{X,Y,M}: (X\otimes Y)\otimes M \to X\otimes
(Y\otimes M)$, $\ell_M: \uno \otimes M\to M$ satisfying natural
associativity and unit axioms, see \cite{eo}, \cite{O1}. We shall
assume, as in \cite{eo}, that all module categories  have only
finitely many isomorphism classes of simple objects. \smallbreak

A module category is {\em indecomposable} if it is not equivalent
to a direct sum of two non trivial module categories. A module
category $\Mo$ over a finite tensor category $\ca$ is \emph{exact}
(\cite{eo}) if  for any projective $P\in \ca$ and any $M\in \Mo$,
the object $P\ot M$ is again projective in $\Mo$.

\subsection{Exact module categories over Hopf algebras} We shall
give a brief account of the results obtained in \cite{AM} on exact
module categories over the category $\Rep(H)$, where $H$ is a
finite-dimensional Hopf algebra.

\medbreak

If $\lambda:A\to H\otk A$ is a left $H$-comodule algebra the
category ${}_A\Mo$ is a module category over $\Rep(H)$. When $A$
is right $H$-simple then ${}_A\Mo$ is an indecomposable exact
module category \cite[Prop 1.20]{AM}. Moreover any module category
is of this form.

\begin{teo}\cite[Theorem 3.3]{AM}\label{mod-overhopf} If $\Mo$ is an exact idecomposable
module category over $\Rep(H)$ then $\Mo\simeq {}_A\Mo$ for some
right $H$-simple left comodule algebra $A$ with $A^{\co
H}=\ku$.\qed
\end{teo}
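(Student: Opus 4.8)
The plan is to combine the general classification of exact module categories over a finite tensor category, due to Etingof and Ostrik \cite{eo}, with the special feature that $\Rep(H)$ carries a fibre functor, namely the forgetful functor $\omega\colon\Rep(H)\to\Vect$.

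First I would reduce to an internal algebra. Since $\ca=\Rep(H)$ is a finite tensor category and $\Mo$ is exact, $\Mo$ has finitely many simple objects and enough projectives, so we may choose a projective generator $M\in\Mo$ (for instance the sum of the projective covers of the simples). The internal Hom functor $\uhom(M,-)\colon\Mo\to\ca$ then exists, and by \cite{eo} exactness of $\Mo$ makes it exact and faithful; moreover $R:=\uend(M)=\uhom(M,M)$ is an algebra object of $\ca$ and $\uhom(M,-)$ induces an equivalence of module categories $\Mo\simeq\ca_R$, where $\ca_R$ is the category of right $R$-modules in $\ca$. Now an algebra object of $\ca=\Rep(H)$ is precisely a finite-dimensional left $H$-module algebra, so at this stage $\Mo$ is described by an $H$-\emph{module} algebra rather than by an $H$-comodule algebra — and the $H$-module algebra $R$ is typically much larger than the comodule algebra we want (by a factor $\dim H$ in the familiar examples).

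The heart of the matter is to convert this datum into a left $H$-comodule algebra. The conceptual reason this is possible is that the fibre functor $\omega$ exhibits $\Vect$ as an invertible bimodule category between $\Rep(H)$ and the category ${}^{H}\Mo$ of left $H$-comodules, so that the two are categorically Morita equivalent (just as $\Rep(G)$ and the category of $G$-graded vector spaces are, in the case $H=\ku G$). Transporting $\Mo$ across this Morita equivalence produces an exact indecomposable module category over ${}^{H}\Mo$; applying the internal-algebra theorem \emph{inside} ${}^{H}\Mo$ presents it as the category of right $A$-modules in ${}^{H}\Mo$ for an algebra object $A$ of ${}^{H}\Mo$ — that is, for a left $H$-comodule algebra $A$ — and transporting back identifies that module category over $\Rep(H)$ with ${}_A\Mo$. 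Equivalently, and more concretely, one checks that the $H$-module algebra $R$ above is \emph{induced} from a (much smaller) left $H$-comodule algebra $A$ — a suitable smash product of $A$ with $H^{*}$ — and that $\ca_{R}\simeq{}_{A}\Mo$ as module categories over $\Rep(H)$. Either way one gets $\Mo\simeq{}_A\Mo$; I expect this passage from the internal $H$-module algebra to the comodule algebra, where the fibre functor is genuinely needed, to be the main obstacle.

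It remains to normalize $A$. After replacing $A$ by an equivariantly Morita equivalent left $H$-comodule algebra we may assume $A^{\co H}=\ku$: the subalgebra $A^{\co H}$ lies in the zeroth Loewy layer, which is semisimple, so $A^{\co H}$ is semisimple; indecomposability of ${}_A\Mo$ forces it to be connected, hence a matrix algebra $M_{n}(\ku)$, and the standard Morita reduction of $M_{n}(\ku)$ to $\ku$ is automatically $H$-colinear since $A^{\co H}$ is $H$-coinvariant by definition. Finally, $A$ is right $H$-simple: a proper $H$-costable right ideal $J\subsetneq A$ would yield a quotient comodule algebra $A/J$ and a proper module subcategory of ${}_A\Mo$, contradicting indecomposability, or — via Skryabin's freeness theorem (Theorem \ref{freeness}), which makes objects of ${}^{H}\Mo_{A}$ free over $A$ after passing to a power and is exactly the tool controlling how projectives behave — contradicting the exactness of $\Mo$. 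Conversely, right $H$-simplicity together with $A^{\co H}=\ku$ guarantees that ${}_A\Mo$ is exact and indecomposable by \cite[Prop.\ 1.20]{AM}, so the two conditions in the statement are precisely the right ones.
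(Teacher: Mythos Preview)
The paper does not prove this theorem: it is quoted from \cite[Theorem 3.3]{AM}, closed with \qed, and followed only by the remark that the proof there relies on \cite{eo} and \cite{O1}. So there is no argument in this paper to compare against beyond that hint; your use of the Etingof--Ostrik internal-Hom machinery is certainly consistent with it.

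Your sketch nevertheless has two concrete gaps in the final paragraph. First, the argument for right $H$-simplicity is incorrect as written: an $H$-costable \emph{right} ideal $J\subsetneq A$ does not make $A/J$ an algebra, so there is no ``quotient comodule algebra'' and no module subcategory arising from it; and your fallback, Skryabin's freeness theorem, takes $H$-simplicity of $A$ as a hypothesis (see Theorem~\ref{freeness}), so invoking it here is circular. Second, the claim that $A^{\co H}$ is semisimple because it lies in the zeroth Loewy layer presumes $A_{0}$ is semisimple; in this paper that is only available under the standing hypothesis $H_{0}=\ku\Gamma$, whereas the theorem is stated for arbitrary finite-dimensional $H$. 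Both problems are avoided by a different choice at the very first step: take $M$ to be a \emph{simple} object rather than a projective generator. A key point of \cite{eo} is that in an exact module category every nonzero object already generates, so one still has $\Mo\simeq\ca_{\uend(M)}$; simplicity of $M$ then yields $\End_{\Mo}(M)=\ku$ (whence $A^{\co H}=\ku$ after the passage to the comodule side) and rules out nontrivial internal ideals of $\uend(M)$ directly, without any Morita reduction.
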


The proof of this result uses in a significant way the results of \cite{eo}, \cite{O1}. The main ingredient here is the $H$-simplicity of
the comodule algebra that helps in the classification results.

\smallbreak

Two left $H$-comodule algebras $A$ and $B$ are \emph{equivariantly Morita equivalent}, and we shall denote it by $A\sim_M B$, if
the module categories ${}_A\Mo$, ${}_B\Mo$ are equivalent as
module categories over $\Rep(H)$.

\begin{prop}\label{eq1}\cite[Prop. 1.24]{AM} The algebras $A$ and $B$ are
Morita equivariant equivalent if and only if there exists $P\in
{}^{H\!}\Mo_{B}$ such that $A\simeq \End_B(P)$ as $H$-comodule
algebras.\qed
\end{prop}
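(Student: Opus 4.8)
The plan is to prove Proposition~\ref{eq1} by exhibiting, for each direction, an explicit equivalence of module categories coming from a tensor/bimodule construction, and then transporting the $H$-comodule algebra structure through it.

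First I would set up the category ${}^{H}\Mo_{B}$ of $(H,B)$-Hopf bimodules (left $H$-comodules which are right $B$-modules in a compatible way) and recall that for $P\in {}^{H}\Mo_{B}$ the functor $P\otimes_B -\colon {}_B\Mo\to {}_A\Mo$, where $A=\End_B(P)$, is well defined; the left $H$-comodule structure on $P$ induces a left $H$-comodule algebra structure on $\End_B(P)$, so $A$ genuinely lives in the world of $H$-comodule algebras. The associativity constraint $m_{X,Y,M}$ and the fact that $X\otimes(P\otimes_B M)\cong (X\otimes P)\otimes_B M\cong P\otimes_B M$ as vector spaces (using that $X\otimes -$ acts diagonally via the comodule structures) gives the module functor structure on $P\otimes_B -$. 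So the ``if'' direction reduces to: when is $P\otimes_B-$ an equivalence? The natural sufficient condition is that ${}_BP$ is a Morita context generator, i.e.\ $P$ is a finitely generated projective generator as a right $B$-module, and one checks this follows from $P\in{}^{H}\Mo_B$ together with $B$ being $H$-simple by invoking Skryabin's Theorem~\ref{freeness}: some power $P^t$ is free over $B$, hence $P$ is a progenerator, hence $P\otimes_B-$ is a Morita equivalence, and the $H$-comodule structures make it a module-category equivalence over $\Rep(H)$.

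For the ``only if'' direction I would start from a module equivalence $\Phi\colon {}_B\Mo\to{}_A\Mo$ over $\Rep(H)$ and set $P=\Phi(B)$, where $B$ is viewed as the left regular $B$-module. The object $P$ is then a left $A$-module, and it is naturally a right $B$-module via the right $B$-action on ${}_BB$ by endomorphisms: $\End_B({}_BB)\cong B^{\op}$ acts, so $P$ carries commuting $A$- and $B$-actions, i.e.\ $P\in {}_A\Mo_B$. The left $H$-comodule structure on $P$ comes from the fact that $\Phi$ is a module functor and $B\in{}^{H}\Mo_B$: unwinding the module functor constraint applied to the regular comodule $H\otimes B$ (or more directly using that every object of ${}_B\Mo$ is a quotient of a direct sum of copies of modules of the form $X\otimes B$ with $X\in\Rep(H)$) endows $P=\Phi(B)$ with a coaction making it an object of ${}^{H}\Mo_B$. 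One then verifies $A\cong\End_B(P)$ as $H$-comodule algebras: as algebras this is because $\Phi$, being an equivalence, gives $\End_A(\Phi(B))\cong\End_B(B)^{\op}$... more carefully, $A$ acts faithfully on $P$ and the image is exactly $\End_B(P)$ since $\Phi$ is fully faithful and ${}_BB$ is a progenerator; compatibility with the $H$-coactions is then a direct check.

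The main obstacle, and the step I would spend the most care on, is the construction of the left $H$-comodule structure on $P$ in the ``only if'' direction and the verification that it is compatible with the right $B$-action, i.e.\ that $P$ really is a Hopf bimodule and not merely a bimodule; this is where the module-functor structure morphisms $m_{X,Y,M}$ must be used in an essential way rather than just the underlying functor, and where one must be careful that the coaction is coassociative. Everything else — the progenerator property, the Morita-theoretic identification $A\cong\End_B(P)$, and the passage between module functors and Hopf bimodules — is essentially formal once Skryabin's freeness result is in hand, so I would cite \cite[Prop.~1.24]{AM} for the technical bookkeeping and present only the conceptual skeleton above, noting that this is precisely the equivariant version of the classical Eilenberg--Watts/Morita theorem.
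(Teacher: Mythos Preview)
The paper does not give its own proof of this proposition: it is quoted from \cite[Prop.~1.24]{AM} and closed immediately with \qed. There is therefore no argument in the present paper to compare your sketch against.

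That said, your outline has a genuine gap in the ``if'' direction. You need $P$ to be a progenerator over $B$ in order for $P\otimes_B-$ to be an equivalence of abelian categories, and you obtain this by invoking Skryabin's Theorem~\ref{freeness}. But Skryabin's theorem has the hypothesis that $B$ be $H$-simple, and that hypothesis is \emph{not} part of the proposition as stated. Without it (or some other condition forcing $P$ to be a progenerator), the implication fails outright: take any $P\in{}^{H}\Mo_B$ that is not a generator as a right $B$-module, set $A=\End_B(P)$, and then ${}_A\Mo$ and ${}_B\Mo$ are not equivalent even as abelian categories. In \cite{AM} the proposition is stated in the context of right $H$-simple comodule algebras with trivial coinvariants, and that standing hypothesis is what makes your appeal to Skryabin legitimate; you should make this explicit rather than silently import it.

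Your ``only if'' direction is the standard equivariant Eilenberg--Watts argument and is essentially correct in outline, though the passage where you extract the $H$-coaction on $P=\Phi(B)$ from the module-functor structure is exactly the point that requires real work (as you yourself flag), and ``unwinding the module functor constraint applied to $H\otimes B$'' is not yet a proof. Since the paper simply cites \cite{AM} here, the cleanest fix is to do the same and not attempt a self-contained proof; if you do want one, state the $H$-simplicity hypothesis up front and then your Skryabin argument goes through.
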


The left $H$-comodule structure on $\End_B(P)$ is given by
$\lambda:\End_B(P)\to H\otk \End_B(P)$, $\lambda(T)=T\_{-1}\ot
T\_0$ where
\begin{equation}\label{h-comod} \langle\alpha, T\_{-1}\rangle\,
T_0(p)=\langle\alpha, T(p\_0)\_{-1}\Ss^{-1}(p\_{-1})\rangle\,
T(p\_0)\_0,\end{equation} for any $\alpha\in H^*$,
$T\in\End_B(P)$, $p\in P$. It is easy to prove that
$\End_B(P)^{\co H}= \End^H_B(P)$.

\subsection{Exact module categories over pointed Hopf algebras}\label{techniq}
We shall explain the technique developed in \cite{M2} to compute
explicitly exact indecomposable module categories over some
families of pointed Hopf algebras.

\medbreak

Let $H$ be a finite-dimensional Hopf algebra. Denote by
$H_0\subseteq H_1 \subseteq\dots \subseteq H_m=H$ the coradical
filtration. Let us assume that $H_0=\ku\Gamma$, where $\Gamma$ is
a finite Abelian group, and that the associated graded Hopf
algebra $U=\gr H$. We shall further assume that $U=\nic(V)\# \ku
\Gamma$, where $V$ is a Yetter-Drinfeld module over $\ku \Gamma$
with coaction given by $\delta:V\to \ku\Gamma\otk V$, and
$\nic(V)$ is the Nichols algebra associated to $V$.

\medbreak

The technique presented in \cite{M2} to find all right $H$-simple
left $H$-comodule algebras is the following. Let $\lambda:A \to H\otk A$ be a right $H$-simple left
$H$-comodule algebra with trivial coinvariants. Consider the Loewy
filtration $A_0\subseteq\dots \subseteq A_m=A$ and the associated
right $U$-simple left $U$-comodule graded algebra $\gr A$.
\medbreak

There is an isomorphism  $\gr A\simeq \nic_A\# A_0$ of $U$-comodule algebras, where $\nic_A\subseteq \gr A$ is a certain $U$-subcomodule
algebra, and $A_0$ happens to be a right $H_0$-simple left $H_0$-comodule
algebra.

\medbreak

Since $H_0=\ku \Gamma$ then $A_0=\ku_{\psi} F$ where $F\subseteq
\Gamma$ is a subgroup and $\psi\in Z^2(F, \ku^{\times})$ is a
2-cocycle. The algebra $\nic_A$ can be seen as a subalgebra in
$\subseteq \nic(V)$ and under this identification $\nic_A$ is an homogeneous left $U$-coideal subalgebra. 

\medbreak

In conclusion, to determine all possible right $H$-simple
left $H$-comodule algebras we have to, first, find all homogeneous
left $U$-coideal subalgebras $K$ inside the Nichols algebra
$\nic(V)$, and then all \emph{liftings} of $K\#\ku_{\psi} F$, that is all
left $H$-comodule algebras $A$ such that $\gr A\simeq
K\#\ku_{\psi} F$.

\medbreak

The problem of finding coideal subalgebras can be a very difficult
one. Some work has been done in this direction for the small
quantum groups $u_q(\mathfrak{sl}_n)$ \cite{KL} and for $U^+_q(\mathfrak{so}_{2n+1})$ \cite{K} also very beautiful results are obtained for right coideal subalgebras inside Nichols algebras \cite{HS} and for the Borel part of a quantized enveloping algebra \cite{HK}.

\medbreak

The following result summarizes what we have explained before.

\begin{teo}\label{descomposition}
Under the above assumptions there exists a graded subalgebra
$\nic_A=\oplus_{i=0}^m \nic_A(i)\subseteq \nic(V)$, a subgroup
$F\subseteq \Gamma$, and a 2-cocycle $\psi\in Z^2(F,
\ku^{\times})$ such that
\begin{itemize}
\item[1.] $\nic_A(0)=\ku$, $\nic_A(i)\subseteq \nic^i(V)$ for all
$i=0,\dots, m$, \item[2.] $\nic_A(1)\subseteq V$ is a $\ku
\Gamma$-subcomodule stable under the action of $F$,
    \item[3.]  for any $n=1,\dots, m$, $\Delta(\nic_A(n))\subseteq
    \oplus_{i=0}^n \,U(i) \otk \nic_A(n-i)$,
    \item[4.] $\gr A\simeq \nic_A\#\, \ku_{\psi} F$ as left $U$-comodule algebras.
\end{itemize}
\end{teo}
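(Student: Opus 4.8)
The plan is to run the three steps underlying the technique of subsection \ref{techniq}, as developed in \cite{M2}, and then to read the four assertions off the output; throughout, $\lambda:A\to H\otk A$ denotes a right $H$-simple left $H$-comodule algebra with $A^{\co H}=\ku$, as supplied by Theorem \ref{mod-overhopf}. First I would form the Loewy filtration $A_0\subseteq A_1\subseteq\cdots\subseteq A_m=A$, $A_n=\lambda^{-1}(H_n\otk A)$, and set $B:=\gr A$; as recalled above, $B$ is a Loewy-graded left $U$-comodule algebra with $B(0)=A_0$. Two facts from \cite{M2} are the backbone here: right $H$-simplicity of $A$ forces right $U$-simplicity of $B$ (obtained by lifting homogeneous costable ideals of $B$ through the filtration, and ultimately resting on Skryabin's freeness theorem, Theorem \ref{freeness}, to compare $A$ and $B$ as comodule algebras); and the degree-zero piece $A_0$, with the $H_0$-comodule structure induced by $\lambda$, is again right $H_0$-simple, with $A_0^{\co H_0}=A^{\co H}=\ku$ --- the inclusion $A^{\co H}\subseteq A_0$ being immediate since $1\ot a\in H_0\otk A$.

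Next, since $H_0=\ku\Gamma$ with $\Gamma$ finite Abelian, a left $\ku\Gamma$-comodule algebra is just a $\Gamma$-graded algebra, and a right $\ku\Gamma$-simple one whose unit component is one-dimensional must be a twisted group algebra; hence $A_0\simeq\ku_{\psi}F$ as $\ku\Gamma$-comodule algebras for a subgroup $F\subseteq\Gamma$ and a cocycle $\psi\in Z^2(F,\ku^{\times})$, which we may take normalized with $\psi(g^{-1},g)=1$. Now I would exploit that $U=\nic(V)\#\ku\Gamma$ is a Radford biproduct: with $\pi:U\to\ku\Gamma$ the canonical Hopf algebra projection, the composite $(\pi\ot\id)\lambda$ makes $B$ a $\ku\Gamma$-comodule algebra, and because $U$ is coradically graded with $U(0)=\ku\Gamma$ while $B$ is Loewy-graded, this $\ku\Gamma$-coaction is graded and coincides in degree zero with the $\ku\Gamma$-comodule algebra $A_0$. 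Setting $\nic_A:=B^{\co\,\ku\Gamma}=\{b\in B:(\pi\ot\id)\lambda(b)=1\ot b\}$, a graded subalgebra of $B$, a Radford-type projection argument for comodule algebras --- carried out in \cite{M2} --- produces an isomorphism of left $U$-comodule algebras $\gr A=B\simeq\nic_A\#A_0=\nic_A\#\ku_{\psi}F$, which is assertion~4, and at the same time realizes $\nic_A$ as a homogeneous left coideal subalgebra of $U$ contained in $\nic(V)$ (the copy $\nic(V)\#1\subseteq U$ is itself a left coideal subalgebra, and the embedding is $a\mapsto a\#1$ followed by $\nic_A\#\ku F\hookrightarrow U$).

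With the identification $\nic_A\subseteq\nic(V)$ in hand, the remaining assertions are bookkeeping. By construction $\nic_A$ is graded with $\nic_A(i)\subseteq\nic^i(V)$, and $\nic_A(0)=(\ku_{\psi}F)^{\co\,\ku\Gamma}=\ku$, which is assertion~1. For assertion~2, $\nic_A(1)\subseteq\nic^1(V)=V$ is a $\ku\Gamma$-subcomodule because $\nic_A$ is a homogeneous $U$-coideal subalgebra, and it is stable under the action of $F$ because the $F$-action on $\gr A$ encoded in the smash product $\nic_A\#\ku_{\psi}F$ preserves $\nic_A$ and respects its grading. Finally, assertion~3 is just the homogeneity of the coideal subalgebra $\nic_A\subseteq U$: since $U$ is a graded Hopf algebra, $\Delta(\nic_A(n))\subseteq\Delta(U(n))\subseteq\bigoplus_{i=0}^n U(i)\ot U(n-i)$, and since $\nic_A$ is a left coideal subalgebra this also lies in $U\otk\nic_A$, whence in $\bigoplus_{i=0}^n U(i)\otk\nic_A(n-i)$.

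The genuine obstacle is concentrated in the two structural inputs from \cite{M2}: the propagation of right $H$-simplicity from $A$ to $\gr A$ through Theorem \ref{freeness}, and the Radford-type splitting $\gr A\simeq\nic_A\#A_0$ together with the concrete embedding $\nic_A\hookrightarrow\nic(V)$ --- for which one must check that the $\ku\Gamma$-coinvariants of $\gr A$ form a smash-product complement to $A_0$ and that the residual $U$-coaction on them is the restriction of the coproduct of $U$. Granting these, the identification of $A_0$ with $\ku_{\psi}F$ (a classification over $\ku\Gamma$) and the verification of assertions~1--3 --- aided by the Loewy-versus-grading comparison of Lemma \ref{loewy=grad} --- are routine.
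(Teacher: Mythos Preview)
Your proposal is correct and follows precisely the approach the paper intends: the paper's own proof consists of the single sentence that \cite[Proposition 7.3]{M2} extends \emph{mutatis mutandis} to arbitrary $F$, and what you have written is a faithful unpacking of that argument as sketched in subsection~\ref{techniq}. Your identification of the two substantive inputs from \cite{M2} --- the Radford-type splitting $\gr A\simeq\nic_A\#A_0$ via $\ku\Gamma$-coinvariants and the right $H_0$-simplicity of $A_0$ --- is accurate, and the deduction of assertions 1--3 from the homogeneous coideal-subalgebra structure is exactly how it goes.
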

\pf The proof of \cite[Proposition 7.3]{M2} extends \emph{mutatis
mutandis} to the case when the group $F$ is arbitrary. \epf

The algebra structure and the left $U$-comodule structure of
$\nic_A\#\, \ku_{\psi} F$ is given as follows. If $x,y\in \kc$,
$f,g\in F$ then
\begin{align*} (x\# g) (y\# f)=x( g\cdot y)\# \psi(g,f)\, gf,\\
\lambda(x\# g)=(x\_1 g )\ot (x\_2\# g),
\end{align*}
where the action of $F$ on $\nic_A$ is the restriction of the
action of $\Gamma$ on $\nic(V)$ as an object in
${}_{\Gamma}^{\Gamma}\YD$. Observe that if $\nic_A=\kc(W)$ for
some $\ku\Gamma$-subcomodule $W$ of $V$ invariant under the action
of $F$, then $\nic_A\#\, \ku_{\psi} F=\kc(W,\psi,F).$

\medbreak

Lemma \ref{lambda-on-V} below will be useful to find liftings of
comodule algebras over Hopf algebras coming from quantum linear
spaces.

\smallbreak

Let us further assume that there is a basis
$\{x_1,\dots,x_{\theta}\}$ of $V$ such that there are elements
$g_i\in\Gamma$ and characters $\chi_i\in \widehat{\Gamma}$,
$i=1,\dots,\theta$ such that $g\cdot x_i=\chi_i(g)\, x_i$,
$\delta(x_i)=g_i\ot x_i$ for all $i=1,\dots,\theta$.

\smallbreak

Let $(G,\lambda_0)$ be a Loewy-graded $U$-comodule algebra with
grading $G=\oplus_{i=0}^{m} G(i)$ such that $G(1)\simeq
V\#\ku_{\psi} F$ under the isomorphism in Theorem
\ref{descomposition} (4) and there is a subgroup $F\subseteq
\Gamma$ such that $G(0)\simeq\ku_{\psi} F$ as $U$-comodules, that
is there is a basis $\{e_f: f\in F\}$ of $G(0)$ such that
$$ e_f e_h=\psi(f,h)\; e_{fh},\quad \lambda_0(e_f)=f\ot e_f,$$
for all $f, h\in F$, and there are elements $y_i\in G(1)$ such
that $\lambda_0(y_i)=x_i\ot 1+ g_i\ot y_i$ for any
$i=1,\dots,\theta$. Also let $\lambda:A\to H\otk A$ be a left
$H$-comodule algebra such that $\gr A=G$.

\begin{lema}\label{lambda-on-V} Under the above assumptions
for any $i=1,\dots,\theta$ there are elements $v_i\in A_1$ such
that the class of $v_i$ in $A_1/A_0=G(1)$ is $\overline{v}_i=y_i$
and \begin{equation}\label{lambda-on-vi} \lambda(v_i)=x_i\ot 1+
g_i\ot v_i, \quad e_f v_i= \chi_i(f)\; v_ie_f,
\end{equation}
for any $i=1,\dots,\theta$ and any $f\in F$.
\end{lema}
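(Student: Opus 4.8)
The plan is to lift the homogeneous relations on $G$ to filtered relations on $A$, exploiting that the Loewy filtration on $A$ is bounded by the coradical filtration on $H$ and that $\gr A = G$. First I would pick, for each $i$, an arbitrary preimage $\tilde v_i \in A_1$ of $y_i \in G(1) = A_1/A_0$, and choose the elements $e_f \in A_0$ to be the distinguished basis of the group algebra $A_0 = \ku_\psi F$ (here $A_0$ is literally a subalgebra of $A$, so no lifting is needed at degree $0$). The goal is then to correct $\tilde v_i$ by an element of $A_0$ so that the two displayed identities in \eqref{lambda-on-vi} hold simultaneously.

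For the comodule identity: since $\lambda$ respects the filtration, $\lambda(\tilde v_i) \in H_1 \otk A_1$, and passing to the associated graded we know $(\gr\lambda)(y_i) = x_i \ot 1 + g_i \ot y_i$ in $U(1)\otk G(1) \oplus U(0)\otk G(1)$. Lifting this back, one gets $\lambda(\tilde v_i) = u_i \ot 1 + g_i \ot \tilde v_i + (\text{lower-order correction terms in } H_0\otk A_0)$, where $u_i \in H_1$ is a preimage of $x_i$ under $H_1 \to H_1/H_0 = U(1) \supseteq V$. Using that $H_0 = \ku\Gamma$ is spanned by grouplikes and the coassociativity/counit axioms for $\lambda$, the $H_0\otk A_0$ part is controlled: applying $(\epsilon\ot\id)$ recovers $\tilde v_i$, applying $(\id\ot\epsilon)$ gives a grouplike-supported element, and one can solve for a correction $c_i \in A_0$ so that $v_i := \tilde v_i - (\text{something built from } c_i)$ satisfies $\lambda(v_i) = x_i \ot 1 + g_i \ot v_i$ exactly — this is the standard argument that a skew-primitive element of a pointed coalgebra filtered over its coradical can be normalized, and I would cite or mirror the relevant computation in \cite{M2} or \cite{AS1}.

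For the commutation identity $e_f v_i = \chi_i(f)\, v_i e_f$: once $v_i$ is genuinely $(x_i,1,g_i)$-skew-primitive, the commutator $[e_f, v_i]_\chi := e_f v_i - \chi_i(f)\, v_i e_f$ lies in $A_1$, and I would compute its image under $\lambda$. Because $e_f$ is grouplike-like (with $\lambda(e_f) = f\ot e_f$) and $v_i$ skew-primitive, $\lambda$ applied to this $\chi$-commutator produces $x_i \ot [e_f,v_i]_{\chi,\text{something}} + g_i \ot [\text{same}] + (\text{a term forcing the element into } A_0)$; more precisely one checks the $\chi$-commutator is again skew-primitive up to an $A_0$-term, and in fact lands in $A_0$ after using $fg_i = g_i f$ (abelian) and $f\cdot x_i = \chi_i(f) x_i$. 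Then $H$-simplicity of $A$ enters: the two-sided ideal generated by this $A_0$-valued obstruction is $H$-costable (one verifies $\lambda$ sends it into $H\otk(\text{ideal})$), and if it were nonzero it would give a nontrivial costable ideal — unless it is forced to be zero, or unless one absorbs it by a further ($F$-equivariant) adjustment of $v_i$ within $A_0$. Comparing with the passage to $\gr A = G$, where the relation $e_f y_i = \chi_i(f) y_i e_f$ holds by hypothesis, pins the obstruction down to lie in $A_0 = \ku_\psi F$ and then a cocycle/character computation over the abelian group $F$ shows it can be killed.

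The main obstacle I expect is the simultaneous normalization: after adjusting $v_i$ to be exactly skew-primitive we must still arrange the commutation relation with \emph{every} $e_f$, and an adjustment of $v_i$ by an element of $A_0$ generically spoils skew-primitivity. The resolution should be that the allowed adjustments form exactly the space of $(x_i,1,g_i)$-skew-primitives inside $A_1$ lying over $y_i$, which is an $A_0$-torsor, and one shows the $\chi$-twisted $F$-action on this torsor has a fixed point — equivalently, the relevant obstruction class in some $H^1(F,\,\text{--})$ or in a cocycle-twisted version vanishes, using that $F$ is abelian, $\psi$ is normalized, and $\ku$ has characteristic zero (so averaging over $F$ is available). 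I would isolate this fixed-point argument as the technical heart of the proof and otherwise present the skew-primitive normalization and the $\lambda$-computations as routine consequences of coassociativity together with Theorem \ref{descomposition} and the $H$-simplicity hypothesis carried implicitly from the setup.
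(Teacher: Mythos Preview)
Your treatment of the skew-primitive normalization (producing $v_i$ with $\lambda(v_i)=x_i\ot 1+g_i\ot v_i$) matches the paper, which simply cites \cite[Lemma~5.5]{M2} for this step.

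For the commutation relation $e_f v_i = \chi_i(f)\,v_i e_f$, however, your route diverges from the paper's and introduces genuine gaps. The paper does \emph{not} adjust $v_i$ any further, does \emph{not} invoke $H$-simplicity, and does \emph{not} appeal to any cohomological or averaging argument. Instead it gives a short dimension count: for each pair $(i,f)$ one sets
\[
\Pc_{i,f}=\{\,y\in A_1 : \lambda(y)=\mu\, fx_i\ot e_f + g_i\ot y \text{ for some } \mu\in\ku\,\},
\]
notes that $e_f v_i \in \Pc_{i,f}$ so each $\Pc_{i,f}$ is nonzero, that distinct $\Pc_{i,f}$ intersect trivially, and that $\dim A_1 = |F|(1+\theta)$ then forces each $\Pc_{i,f}$ to be one-dimensional. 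Since both $v_i$ and $e_f v_i e_{f^{-1}}$ lie in $\Pc_{i,1}$, they are proportional, and comparing the $x_i\ot 1$-coefficients pins the scalar down to $\chi_i(f)$. Thus once $\lambda(v_i)$ has the skew-primitive form, the commutation relation holds \emph{automatically}; your worry about ``simultaneous normalization'' never arises.

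Two concrete problems with your plan: first, right $H$-simplicity of $A$ is not among the hypotheses of the lemma --- it is assumed only later, in Theorem~\ref{clasification-qspaces} --- so you cannot use a costable-ideal argument here. Second, your fallback (absorbing the $A_0$-valued obstruction by an $F$-equivariant adjustment of $v_i$, via an $H^1(F,-)$ vanishing/averaging argument) is left as a sketch; the adjustment freedom is a single scalar multiple of $e_{g_i}$ while there is one constraint per $f\in F$, so making this rigorous would require extracting a cocycle condition on the obstruction and is not as routine as you suggest. The paper's dimension count sidesteps all of this.
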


\pf The existence of elements $v_i$ such that $\lambda(v_i)=x_i\ot
1+ g_i\ot v_i$ is \cite[Lemma 5.5]{M2}. For any $i=1,\dots,\theta$
and any $f\in F$ set
$$\Pc_{i,f}=\{y\in A_1: \lambda(y)=\mu\; fx_i\ot e_f + g_i\ot y,\; \;  \mu\in\ku\}.$$
The sets $\Pc_{i,f}$ are non-zero vector spaces since $e_fv_i\in
\Pc_{i,f}$, thus $\dim\Pc_{i,f}\geq 1.$ It is evident that if
$(i,f)\neq (i',f')$ then $\Pc_{i,f}\cap \Pc_{i',f'}=\{0\}$. Since
$\dim A_1=\dim G(0)+\dim G(1)=\mid F \mid(1+\theta)$ this forces
to $\dim\Pc_{i,f}=1$. Hence, since $e_f v_i e_{f^{-1}}, v_i\in
\Pc_{i,1}$ there exists $\nu\in \ku$ such that $e_f v_i
e_{f^{-1}}=\nu\; v_i$, but this scalar must be equal to
$\chi_i(f)$.\epf

\section{Module categories over quantum linear
spaces}\label{mod-overqs}

In this section we shall apply the technique explained above to
describe exact module categories over Hopf algebras coming from
quantum linear spaces. Let $\theta\in \Na$ and $\Gamma$ be a
finite Abelian group, $(g_1,\dots, g_{\theta},\chi_1,
\dots,\chi_{\theta})$ be a datum of a quantum linear space,
$V=V(g_1,\dots, g_{\theta},\chi_1, \dots,\chi_{\theta})$ the
Yetter-Drinfeld module over $\ku \Gamma$ and $\qs=\nic(V)$ its
Nichols algebra. Let $U= \qs\#\ku\Gamma$ denote the bosonization.
Elements in $U$ will be denoted by $v\# g$ instead of $v\ot g$ to
emphasize the presence of the semidirect product.

\medbreak

To describe all exact indecomposable module categories over
$\Rep(U)$ we will describe all possible right $U$-simple left
$U$-comodule algebras. For this description
 we shall need first the following crucial result which
essentially says that such comodule algebras are generated in
degree 1.
\begin{prop}\label{generation-in-degree-1} Let
$K=\oplus_{i=0}^m K(i)\in\qs$ be a graded subalgebra such that
\begin{itemize}
\item[1.] for all $i=0,\dots, m$,  $K(i)\subseteq \qs(i)$,
    \item[2.] $K(1)=W\subseteq V$ is a $\ku\Gamma$-subcomodule,
    \item[3.] $ \Delta(K(n))\subseteq \oplus_{i=0}^n\, U(i)\otk K(n-i).$
\end{itemize}
Then $K$ is generated as an algebra by $K(1)$, in another words
$K\simeq \kc(W)$.
\end{prop}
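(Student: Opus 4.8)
The plan is to prove by induction on $n$ that $K(n)$ is contained in the subalgebra generated by $K(1)=W$. For $n=0$ this is the hypothesis $K(0)=\ku$ (one must note that $K$ being a graded subalgebra containing the unit forces $K(0)=\ku$, since $\qs(0)=\ku$), and for $n=1$ there is nothing to prove. So fix $n\geq 2$ and assume $K(i)\subseteq\langle W\rangle$ for all $i<n$. The main tool is the comultiplication: by hypothesis (3), for $z\in K(n)$,
\[
\Delta(z)\in\bigoplus_{i=0}^n U(i)\otk K(n-i)=z\ot 1+1\ot z+\sum_{i=1}^{n-1}U(i)\otk K(n-i),
\]
where the extreme terms come from the coradical grading of $U$ (here $z\ot 1\in U(n)\ot K(0)$ and $1\ot z\in U(0)\ot K(n)$, and one uses that $\qs$, being a connected graded braided Hopf algebra, has these as the only grouplike-type pieces in degrees $0$ and $n$). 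The crucial point is that the middle sum lies in $U\otk\langle W\rangle$ by the inductive hypothesis, since each $K(n-i)$ with $1\le n-i\le n-1$ is already in $\langle W\rangle$.

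The heart of the argument is then to analyze the braided Hopf algebra structure of $\qs=\nic(V)$ explicitly, using the monomial basis $\{x_1^{r_1}\cdots x_\theta^{r_\theta}: 0\le r_j<N_j\}$ coming from relations \eqref{qls3}. On such a monomial of total degree $n$, the braided coproduct in $\nic(V)$ is a $q$-deformed binomial expansion: for a single generator $\Delta(x_i)=x_i\ot 1+1\ot x_i$, and for a power $x_i^r$ one gets $\Delta(x_i^r)=\sum_{k}\binom{r}{k}_{q_i}x_i^k\ot x_i^{r-k}$ with $q_i$-binomial coefficients, and for a general monomial the coproduct is the (suitably braided) product of these. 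The key observation is that if $z\in K(n)$ is written in the monomial basis, then the component of $\Delta(z)$ in $\qs(1)\otk\qs(n-1)$ — which by the displayed formula must lie in $V\otk K(n-1)=W\otk K(n-1)$ — records, for each generator $x_i$, the "$x_i$-derivative" of $z$. Concretely, I would use the skew-primitive-type projection $\partial_i:\qs(n)\to\qs(n-1)$ obtained by pairing the first tensorand with a functional dual to $x_i$; the formula above shows $\partial_i(z)\in K(n-1)\subseteq\langle W\rangle$, and moreover $\partial_i(z)=0$ whenever $x_i\notin W$ (because the relevant component of $\Delta(z)$ must sit in $W\otk\cdots$, and $x_i\notin W$ means $x_i$ does not appear there). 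One then shows that if every such "partial derivative" of a degree-$n$ element $z$ lies in $\langle W\rangle$ and vanishes in the directions outside $W$, then $z$ itself — up to adding a constant, impossible in degree $n\ge 1$ — lies in $\langle W\rangle$: this is a standard reconstruction-from-derivatives argument for the quantum polynomial algebra $\qs$, where $z$ is recovered from $\{\partial_i z\}_i$ by an explicit "antidifferentiation" (dividing by $q$-integers, which are invertible since the $q_i$ have order $N_i$ and only exponents $<N_i$ occur), together with the fact that $\langle W\rangle$ is itself stable under the $\partial_i$ for $x_i\in W$ and annihilated by $\partial_i$ for $x_i\notin W$.

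I expect the main obstacle to be the bookkeeping in this last "reconstruction" step: one must carefully handle the braiding signs $q_{ij}$ when reassembling a monomial from its partial derivatives, and one must argue that the derivatives $\partial_i z$ for $x_i\in W$ determine $z$ modulo $\langle W\rangle$ rather than only modulo the whole degree-$(<n)$ part — i.e., that no "constant of integration" involving generators outside $W$ can appear, which is exactly where the vanishing $\partial_i z=0$ for $x_i\notin W$ is used. A clean way to organize this is to filter $\qs$ by a fixed generator, say $x_\theta$: write $z=\sum_{r}z_r x_\theta^r$ with $z_r\in\langle x_1,\dots,x_{\theta-1}\rangle$ of degree $n-r$, compute $\partial_\theta z$ in terms of the $z_r$ to pin down (by induction on $r$) all $z_r$ with $r\ge 1$, then handle $z_0$ by induction on $\theta$; if $x_\theta\notin W$ the vanishing of $\partial_\theta z$ kills all $z_r$ with $r\ge1$ outright. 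Once the induction closes, $K=\bigoplus_i K(i)\subseteq\langle W\rangle$, and the reverse inclusion $\langle W\rangle\subseteq K$ is clear since $W=K(1)\subseteq K$ and $K$ is a subalgebra; hence $K=\langle W\rangle=\kc(W)$.
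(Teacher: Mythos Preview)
Your inductive setup and the idea of reading off ``partial derivatives'' from the $(1,n-1)$ component of $\Delta(z)$ are natural, but there is a genuine gap at the crucial step. You assert that this component ``must lie in $V\otk K(n-1)=W\otk K(n-1)$'', and from this deduce $\partial_i(z)=0$ whenever $x_i\notin W$. But hypothesis (3) says $\Delta(K(n))\subseteq\bigoplus_i U(i)\otk K(n-i)$: it constrains the \emph{second} tensorand to lie in $K$, not the first. After projecting $U(1)\to V$ one only gets that the $(1,n-1)$ piece lies in $V\otk K(n-1)$; there is no reason for the left factor to fall into $W$. So your partial derivative $\partial_i(z)$ (obtained by pairing the first tensorand with $x_i^*$) does land in $K(n-1)$, but the vanishing for $x_i\notin W$ is unjustified --- you have implicitly treated $K$ as a subcoalgebra rather than merely a left coideal. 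Without this vanishing the reconstruction step collapses: you cannot rule out ``constants of integration'' involving generators outside $W$.

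The paper avoids this by using \emph{both} sides of the coproduct. Projecting the \emph{second} tensorand to $V_{g_1}$ via $(\id\ot\pi)\Delta(z)$ and invoking (3) forces that factor into $W_{g_1}$; this is what shows that the relevant generator (or linear combination) actually lies in $W$. Projecting the \emph{first} tensorand onto the span of $\{wg:g\in\Gamma\}$ for a fixed $w\in W$, via $(p\ot\id)\Delta(z)$, then shows that the complementary factors $y_j$ in a decomposition $z=\sum_j w^jy_j$ lie in $K$ (this is the paper's Claim). The induction is on $n+d$ (degree plus number of variables), and a case analysis on $\dim V_{g_1}$ handles the situation where $W_{g_1}$ is a proper nonzero subspace of $V_{g_1}$ --- precisely the case your dichotomy ``$x_i\in W$ or $x_i\notin W$'' does not see.
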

\pf Let $n\in \Na$, $0<n\leq m$. Since $W$ is a
$\ku\Gamma$-subcomodule then $W=\oplus^\theta_{i=0} W_{g_i}$. Let
$z\in K(n)$ be a nonzero element and let $1\leq d\leq \theta$ be
the number of $x_i's$ appearing in $z$ with non-zero coefficient.
We shall prove by induction on $n+d$ that $z\in\kc(W)$. If $n+d=2$
there is nothing to prove because in that case $d=1$ and $n=1$, so
assume that every time that $y\in K(n)$ is an element with $d$
different variables and $n+d< l$ then $y\in\kc(W)$. We shall use
the following claim as the main tool for the induction.
\begin{claim}\label{claim-inductive} If $2\leq n$ and $z\in K(n)$,
$z=\sum^{n-1}_{j=1} w^j y_j$, where $w\in W_h$, for some $h\in
\Gamma$ and for any $j=1,\dots,n-1$ the elements $y_j\in \qs(n-j)$
are such that the $x_i's$ appearing in the decomposition of $y_j$
does not appear in $w$. Then $y_j\in K(n-j)$ for any
$j=1,\dots,n-1$.
\end{claim}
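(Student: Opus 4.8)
The plan is to reformulate hypothesis (3) in terms of twisted derivations and then ``differentiate away'' the powers of $w$. For each $i$ let $\partial_i\colon\qs\to\qs$ be the degree $-1$ map sending a homogeneous $a\in\qs(n)$ to the unique $\partial_i(a)\in\qs(n-1)$ for which the bidegree-$(1,n-1)$ component of $\Delta(a)$ equals $\sum_i x_i\ot\partial_i(a)$; these are the standard twisted derivations of the quantum linear space, satisfying $\partial_i(x_k)=\delta_{ik}$ and $\partial_i(ab)=\partial_i(a)\,b+\gamma\,a\,\partial_i(b)$ for homogeneous $a$, with $\gamma$ the appropriate braiding scalar. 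The first point is that (3) gives $\partial_i(K(n))\subseteq K(n-1)$: if $z\in K(n)$ then, since $K(n-1)\subseteq\qs(n-1)\subseteq U(n-1)$, the bidegree-$(1,n-1)$ component of $\Delta(z)$ already lies in $U(1)\otk K(n-1)$ by (3), and $\partial_i(z)$ is obtained from it by pairing the first tensorand against the functional $v\# g\mapsto\langle x_i^*,v\rangle$ on $U(1)$.

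I would then discard the vanishing summands (so that $w^jy_j\neq 0$, hence $w^j\neq 0$ and $y_j\neq 0$, for every $j$ occurring), pick an index $i_0$ with $x_{i_0}$ appearing in $w$, say with coefficient $a\neq 0$, and compute. Because each $y_j$ lies in the subalgebra generated by the variables other than $x_{i_0}$, the Leibniz rule gives $\partial_{i_0}(y_j)=0$, hence $\partial_{i_0}(w^jy_j)=\partial_{i_0}(w^j)\,y_j$. A short induction with the Leibniz rule gives $\partial_{i_0}(w^j)=a\,[j]_\zeta\,w^{j-1}$, where $\zeta=\chi_{i_0}(g_{i_0})$ has order $N_{i_0}$ and $[j]_\zeta=1+\zeta+\dots+\zeta^{j-1}$; and using \eqref{qls2}, \eqref{qls3} and Remark \ref{whenN=2} (the variables of a fixed $\ku\Gamma$-degree generate a truncated quantum plane, an exterior algebra once there are at least three of them) one checks that $w^j=0$ for $j\geq N_{i_0}$, so that $[j]_\zeta\neq 0$ for every $j$ with $w^j\neq 0$. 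Iterating, $\partial_{i_0}^{\,l}(z)=\sum_j a^l\,[j]_\zeta[j-1]_\zeta\cdots[j-l+1]_\zeta\;w^{j-l}y_j$.

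Finally I would peel off the top term. Let $j^\ast$ be the largest $j$ occurring. Applying $\partial_{i_0}^{\,j^\ast}$ annihilates every summand with $j<j^\ast$ and leaves a nonzero scalar multiple of $y_{j^\ast}$; since $\partial_{i_0}^{\,j^\ast}$ maps $K(n)$ into $K(n-j^\ast)$, this forces $y_{j^\ast}\in K(n-j^\ast)$. As $w\in W=K(1)$ and $K$ is a subalgebra, $w^{j^\ast}y_{j^\ast}\in K(n)$, so $z-w^{j^\ast}y_{j^\ast}\in K(n)$ is again an expression of the required form with one fewer summand, and an induction on the number of summands then finishes the proof.

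The one step that needs genuine care is the identity $\partial_i(K(n))\subseteq K(n-1)$, where hypothesis (3) is used, together with the nonvanishing of the quantum integers $[j]_\zeta$ on the powers $w^j$ that actually occur --- and it is precisely here that the structure of a quantum linear space enters (all generators of a given $\ku\Gamma$-degree share one order, which is $2$ as soon as there are three or more of them). Everything else is bookkeeping with the twisted Leibniz rule and with the disjointness of the variables of $w$ and of the $y_j$.
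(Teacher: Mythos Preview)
Your proof is correct and follows essentially the same strategy as the paper's: both apply a suitable functional to the first tensorand of the bidegree-$(1,n-1)$ piece of $\Delta(z)$, use hypothesis~(3) to land in $K(n-1)$, and then iterate to strip off the powers of $w$. The only cosmetic difference is that the paper projects onto the span of $\{wg:g\in\Gamma\}$ (a ``$w$-directional'' derivative) whereas you pick a single coordinate $x_{i_0}$ of $w$ and use the standard skew-derivation $\partial_{i_0}$; your bookkeeping with the quantum integers $[j]_\zeta$ and the check that $w^j=0$ for $j\geq N_{i_0}$ makes the nonvanishing of the leading coefficient explicit, which the paper leaves implicit.
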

\begin{proof}[Proof of Claim]
Let $z\in  K(n)$ such that $z=\sum^{n-1}_{j=1}\, w^j y_j$, as
above. Let $p: U\to U(1)$ be the linear map defined by: $p(wg)=wg$
for any $g\in \Gamma$ and $p(x)=0$ if $x\notin <wg: g\in
\Gamma>_{\ku}$.

Using (3) we obtain that $(p\ot\id)\Delta(z)\in  U(1)\ot K(n-1)$
and using that $\Delta(w)=w\ot 1+ h\ot w$, a simple computation
shows that
$$(p\ot\id)\Delta(z)=\sum^{n-1}_{j=1}\, w\beta_j\ot w^{j-1} y_j,$$
for some $\beta_j\in \ku\Gamma$. The second equality follows
because $p(y_j)=0$ for any $j=1,\dots,n-1$. Therefore the element
$\sum^{n-1}_{j=1}\, w^{j-1} y_j\in K(n-1)$. Repeating this process
we deduce that $y_{n-1}\in  K(n-1)$ and arguing inductively we
conclude that each $y_j \in K(n-j)$ for any $j=1,\dots,n-1$.
\end{proof}

Let $z\in K(n)$ be a nonzero element and let $1\leq d\leq \theta$
be the number of $x_i's$ appearing in $z$ with non-zero
coefficient. Assume that $n+d=l$. Since $\qs(n)$ is generated by
the monomials $\{x^{l_1}_1\dots x^{l_\theta}_\theta:
l_1+\dots+l_\theta=n\}$, and $K(n)\subseteq \qs(n)$ we can write
$z=\sum_{l_1+\dots+l_\theta=n} \, \alpha_{l_1,\dots,l_\theta} \;
x^{l_1}_1\dots x^{l_\theta}_\theta$ where $
\alpha_{l_1,\dots,l_\theta}\in \ku$ and $ 0\leq l_i\leq N_i$.
There is no harm to assume that the monomial $x_1$ appears, that
is there exists $l_1,\dots,l_\theta$ with $0<l_1$ such that
$\alpha_{l_1,\dots,l_\theta}\neq 0$, since otherwise we can repeat
the argument with $x_2$ or $x_3$ and so on.

\smallbreak

Under this mild assumption the space $W_{g_1}$ is not zero.
Moreover there is an element $0\neq w\in W_{g_1}$ where
$w=\sum^\theta_{i=1}\, a_i\, x_i$ and $a_1\neq 0$.  Indeed, if
$\pi:U\to V_{g_1}$ denotes the canonical projection, the quantum
binomial formula implies that for any $j=1,\dots,\theta$
\begin{align}\label{qbinom}\Delta
(x_{j}^{l_{j}}) = \sum^{l_j}_{k_j=0}
{\binom{l_{j}}{k_{j}}}_{q_{j}} \;x_{j}^{l_j-k_{j}} g_j^{k_{j}}
\otimes x_{j}^{k_{j}},
\end{align}where
${\binom{l_{j}}{i_{j}}}_{q_{j}}$ denotes the quantum Gaussian
coefficients. Using (3) we know that $ (\id\ot\pi)\Delta(z)\in
H(n-1)\otk W_{g_1}$ and equals to
$$\sum_{\substack{j=1,\dots,\theta\\l_1+\dots+l_\theta=m}}
\;\,\alpha_{l_1,\dots,l_\theta}{\binom{l_{j}}{1}}_{q_{j}} \;
x^{l_1}_1\dots x^{l_j-1}_j g_j\dots x^{l_\theta}_\theta \ot x_j.$$
Since there exists $l_1,\dots, l_\theta$ such that
$l_1+\dots+l_\theta=m$ and $0<l_1$,
$\alpha_{l_1,\dots,l_\theta}\neq 0$ then
$(\id\ot\pi)\Delta(z)=\sum h_j\ot w_j$ where at least one $w_j\neq
0$ written in the basis $\{x_1, \dots, x_\theta\}$ has positive
coefficient in $x_1$.

\smallbreak

Up to reordering the variables we can assume that
$g_1=g_2=\dots=g_{r_1}$ and if $r_1< j$ then $g_j\neq g_1$. In
this case $\dim V_{g_1}=r_1$. We shall treat separately the
following three cases: Case (A) $r_1=1$, Case (B) $r_1=2$, Case
(C) $r_1> 2$.

\smallbreak

Since $W\subseteq V$ is a $\ku\Gamma$-subcomodule, in case (A)
$W_{g_1}=\{0\}$ or $W_{g_1}=V_{g_1}$. We have proven that
$W_{g_1}$ is not zero, hence $W_{g_1}=V_{g_1}$. Let us write
$z=\sum^{N_1}_{i=0} \, x^i_1 y_i$, where $y_i\in \qs(n-i)$, and
$x_1$ does not appear in any factor of $y_i$, that is, for any
$i=0,\dots, N_1$
$$y_i=\sum \gamma^i_{l_2,\dots,l_\theta}\; x^{l_2}_2\dots
x^{l_\theta}_\theta, $$ for some $\gamma^i_{l_2,\dots,l_\theta}\in
\ku$. The projection $V\to V$ that maps $V_{g_1}$ to zero, extends
to an algebra map $q:\qs\to \qs$. Using (3) we get that $(q\ot
q)\Delta(z)= \Delta(y_0)$, thus $y_0\in K(n)$, and therefore
$z-y_0=\sum^{N_1}_{i=1} \, x^i_1 y_i\in K(n)$. Using Claim
\ref{claim-inductive} we deduce that for any $i=1,\dots, N_1$ the
element $y_i\in K(n-i)$ and by inductive hypothesis each $y_i\in
\kc(W)$ for all $i=0,\dots, N_1$.

\medbreak

Now we proceed to the case (B). In this case  $V_{g_1}$ has basis
$\{x_1, x_2\}$, and $W_{g_1}=0$, $\dim W_{g_1}=1$ or
$W_{g_1}=V_{g_1}$. The first case is impossible. If
$W_{g_1}=V_{g_1}$ then $x_1\in W_{g_1}$ and we proceed as in case
(A). Let us assume that $\dim W_{g_1}=1$, that is $ W_{g_1}$ is
generated by an element $a\, x_1+ b\, x_2$, for some $a, b\in
\ku$, where we can assume that $b\neq 0$ because otherwise $x_1
\in W_{g_1}$.

Let $p:K\to V_{g_1}$ be the canonical projection. Follows from
\eqref{qbinom} that $(\id\ot p)\Delta(z)$ equals to
\begin{align*}\sum_{l_1+\dots+l_\theta=n} \, \alpha_{l_1,\dots,l_\theta}{\binom{l_{1}}{1}}_{q_{1}} \;
&x^{l_1-1}_1g_1 x^{l_2}_2\dots x^{l_\theta}_\theta \ot x_1 +\\
&+ \sum_{l_1+\dots+l_\theta=n} \,
\alpha_{l_1,\dots,l_\theta}{\binom{l_{2}}{1}}_{q_{2}} \; x^{l_1}_1
x^{l_2-1}_2g_2 x^{l_3}_3\dots x^{l_\theta}_\theta \ot x_2.
\end{align*}
Using (3) we obtain that $(\id\ot p)\Delta(z)\in H(n-1)\ot
W_{g_1}$, hence there exists an element $v\in  H(n-1)$ such that
$(\id\ot p)\Delta(z)=a v\ot x_1 + b v\ot x_2$, thus
$$av=\sum_{l_1+\dots+l_\theta=n} \, \alpha_{l_1,\dots,l_\theta}{\binom{l_{1}}{1}}_{q_{1}} \;
x^{l_1-1}_1g_1 x^{l_2}_2\dots x^{l_\theta}_\theta,$$
$$bv=\sum_{l_1+\dots+l_\theta=n} \, \alpha_{l_1,\dots,l_\theta}{\binom{l_{2}}{1}}_{q_{2}} \; x^{l_1}_1 x^{l_2-1}_2g_2 x^{l_3}_3\dots x^{l_\theta}_\theta.$$
Comparing coefficients from the above equations and using that
$g_1=g_2$, $x_2x_1=q_1\; x_1x_2$, $g_1x_1=q_1\; x_1g_1$,
$g_1x_2=q^{-1}_1\; x_2g_1$ we obtain that
\begin{equation}\label{alpha-coef} \alpha_{l_1+1, l_2,\dots,l_\theta}=\frac{a}{b}\;
\frac{q^{l_2+1}_1-1}{q^{l_1+1}_1-1}\; \alpha_{l_1,
l_2+1,l_3,\dots,l_\theta}.
\end{equation}
For any $m\in \Na$ set
$\gamma^m_{l_3,\dots,l_\theta}=\alpha_{0,m,l_3,\dots,l_\theta}$,
then if $l_1+l_2=m$ we deduce from equation \eqref{alpha-coef}
that
\begin{equation}\label{alpha-coef2}\alpha_{l_1, l_2,\dots,l_\theta}= {\binom{m}{l_1}}_{q_{1}}
a^{l_1} b^{m-l_1}
\frac{\gamma^m_{l_3,\dots,l_\theta}}{b^m}.\end{equation}

Then we can write the element $z$ as
\begin{align*}&\sum_{m\geq
0} \sum_{\substack{l_3+\dots+l_\theta=n-m\\ l_1+l_2=m} } \,
\alpha_{l_1,\dots,l_\theta} \; x^{l_1}_1\dots
x^{l_\theta}_\theta=\\
&=\sum_{m\geq 1 } \sum_{\substack{l_1+l_2=m \\
l_3+\dots+l_\theta=n-m}} \alpha_{l_1,\dots,l_\theta} \;
x^{l_1}_1\dots x^{l_\theta}_\theta +
\sum_{l_3+\dots+l_\theta=n}\alpha_{0,0,l_3,\dots,l_\theta} \;
x^{l_3}_3\dots x^{l_\theta}_\theta.
\end{align*}
Using the same argument as before and the inductive hypothesis we
deduce that the element
$y_0=\sum_{l_3+\dots+l_\theta=n}\alpha_{0,0,l_3,\dots,l_\theta} \;
x^{l_3}_3\dots x^{l_\theta}_\theta \in \kc(W)$ and $z-y_0\in
K(n)$. From \eqref{alpha-coef2} we conclude that
$$z-y_0= \sum_{m\geq 1 } \sum_{
l_3+\dots+l_\theta=n-m}\;
\frac{\gamma^m_{l_3,\dots,l_\theta}}{b^m} (ax_1+bx_2)^m x_3^{l_3}
\dots x^{l_\theta}_\theta,
$$
hence by Claim \eqref{claim-inductive} $z-y_0 \in \kc(W)$ thus
$z\in \kc(W)$. Case (C) can be treated in a similar way as case
(B). \epf

\begin{rmk} The above result uses in an essential way the
 structure of $\qs$ and it is no longer true for
arbitrary Nichols algebras. It is worth to mention that this is
one of the main difficulties to classify module categories over,
for example, $\Rep(u_q(\mathfrak{sl}_3))$, since there are homogeneous coideal subalgebras that are not generated in degree 1.
\end{rmk}

Let us define now a family of right $U$-simple left $U$-comodule
algebras. Let $F\subseteq \Gamma$ be a subgroup, $\psi\in Z^2(F,
\ku^{\times})$ a 2-cocycle and $\xi=(\xi_{i})_{i=1\dots \theta}$,
$\alpha=(\alpha_{ij})_{1\leq i<j\leq\theta}$ be two families of
elements in $\ku$ satisfying
\begin{align}\label{parameters1} \xi_{i}=0\; \text{ if }\;
g^{N_i}_i\notin F  \text{ or }\;\chi^{N_i}_i(f)\neq
\psi_{g^{N_i}_i}(f),
\end{align}
\begin{align}\label{parameters2}
 \alpha_{ij}=0\; \text{ if }\;
g_ig_j \notin F  \text{ or }\;
 \chi_i\chi_j(f)\neq \psi_{g_ig_j}(f),
\end{align}
for all $f\in F$. In this case we shall say that the pair $(\xi,
\alpha)$ is \emph{compatible comodule algebra datum} with respect
to the quantum linear space $\qs$, the 2-cocycle $\psi$ and the
group $F$.

\begin{defi} The algebra $\Ac(V, F, \psi, \xi, \alpha)$ is the algebra
generated by elements in $\{v_i:i=1\dots \theta\}$, $\{e_f: f\in
F\}$ subject to relations
\begin{align}\label{relations1} e_f e_g = \psi(f,g)\; e_{fg},\quad
e_f v_i =  \chi_i(f)\; v_i e_f,
\end{align}
\begin{align}\label{relations2}  v_i v_j - q_{ij}\; v_jv_i=\begin{cases}
  \alpha_{ij}\; e_{ g_ig_j}\;\; \;\text{ if }
  g_ig_j\in F\\
0 \;\;\; \;\;\;\quad\quad\text{ otherwise, }
\end{cases}
\end{align}
\begin{align}\label{relations3} v_i^{N_i}=\begin{cases}  \xi_{i}\; e_{ g_i^{N_i}}\;\; \;\text{ if } g_i^{N_i}\in F\\
0 \;\;\; \;\;\;\quad\quad\text{ otherwise, }
\end{cases}
\end{align}

for any $1\leq i< j\leq \theta$. Observe that we are abusing of
the notation since we are changing the name of the variables $x_i$
by $v_i$ to emphasize that the elements no longer belong to $U$.
If $W\subseteq V$ is a $\ku\Gamma$-subcomodule invariant under the
action of $F$, we define $\Ac(W, F, \psi, \xi, \alpha)$ as the
subalgebra of $\Ac(V, F, \psi, \xi, \alpha)$ generated by $W$ and
$\{e_f: f\in F\}$.

\end{defi}

The algebra $\Ac(V, F, \psi, \xi, \alpha)$ is a left $U$-comodule
algebra with structure map $ \lambda:\Ac(V, \psi, \xi, \alpha)\to
U\otk \Ac(V,F, \psi, \xi, \alpha)$ given by:
$$ \lambda(v_i)=x_i\ot 1 + g_i\ot v_i, \quad  \lambda(e_f)=f\ot e_f,$$
It is clear that the map $\lambda$ is well defined and is an
algebra morphism and that the subalgebra $\Ac(W, F, \psi, \xi,
\alpha)$ is a $U$-subcomodule.

\begin{rmk}\begin{itemize}
    \item[1.] The algebra $\Ac(W, F, \psi, \xi, \alpha)$ does not
depend on the class of the 2-cocycle $\psi$.
    \item[2.] If $W=0$ then $\Ac(W, F, \psi, \xi, \alpha)= \ku_{\psi} F.$
\end{itemize}
\end{rmk}

\begin{prop}\label{propertiesA1} Under the above assumptions the
 following assertions hold.
\begin{enumerate}

    \item For any 2-cocycle $\psi$ of $\Gamma$ and any
    compatible comodule algebra
    datum $(\xi,\alpha)$ the
    algebra $\Ac(V, \Gamma, \psi, \xi, \alpha)$ is a Hopf-Galois extension
    over the field $\ku$.

\item The Loewy filtration on $\Ac=\Ac(V, F, \psi, \xi, \alpha)$
is given as follows
\begin{align}\label{l-filt} \Ac_n=<\{ e_f v^{r_1}_1 \dots v^{r_{\theta}}_{\theta}:
r_1+\dots +r_{\theta}=m:m\leq n , f\in F\}>_{\ku}.
\end{align}

    \item The graded algebra $\gr \Ac(W, F, \psi, \xi, \alpha)$ is
    isomorphic to  $\kc(W,\psi,F)$.
\end{enumerate}
\end{prop}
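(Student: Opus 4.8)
The plan is to establish the three assertions of Proposition \ref{propertiesA1} in order, using the explicit presentation of $\Ac = \Ac(V,F,\psi,\xi,\alpha)$ together with the general machinery recalled in Sections \ref{qls} and \ref{techniq}. For part (1), I would first observe that $\Ac(V,\Gamma,\psi,\xi,\alpha)$ has a PBW-type basis $\{e_g\, v_1^{r_1}\cdots v_\theta^{r_\theta}: g\in\Gamma,\ 0\le r_i<N_i\}$: the relations \eqref{relations1}--\eqref{relations3} are exactly of $q$-commutator type plus the group relations, so a standard Bergman diamond-lemma argument (or comparison with the associated graded, which is $\qs\#\ku\Gamma = U$) shows $\dim\Ac(V,\Gamma,\psi,\xi,\alpha) = \dim U = |\Gamma|\prod_i N_i$. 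Since $\Ac(V,\Gamma,\psi,\xi,\alpha)^{\co U}=\ku$ (the coaction on the $e_f$ detects $\Gamma$ and on the $v_i$ detects the skew-primitive part, so a coinvariant element lies in degree zero and is $\Gamma$-invariant, hence scalar), finite-dimensionality plus trivial coinvariants gives that $\Ac(V,\Gamma,\psi,\xi,\alpha)$ is a cleft, hence Hopf--Galois, extension of $\ku$; alternatively one exhibits the cleaving map directly by sending $x_i\#g\mapsto v_ie_g$ on the PBW basis and checks it is convolution-invertible.

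For part (2), I would show that the right-hand side of \eqref{l-filt} defines an algebra filtration $\Ac^0\subseteq\Ac^1\subseteq\cdots\subseteq\Ac^m=\Ac$ satisfying the hypotheses of Lemma \ref{loewy=grad} with respect to the coradically graded Hopf algebra $U=\oplus U(i)$. That it is an algebra filtration is immediate from the defining relations, since every relation expresses a product of generators in terms of elements of the same or lower filtration degree (the terms $\alpha_{ij}e_{g_ig_j}$ and $\xi_i e_{g_i^{N_i}}$ lie in $\Ac^0$). The compatibility $\lambda(\Ac^n)\subseteq\sum_{i=0}^n U(i)\otk\Ac^{n-i}$ follows from $\lambda(v_i)=x_i\ot 1+g_i\ot v_i$ and $\lambda(e_f)=f\ot e_f$ together with multiplicativity of $\lambda$. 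Finally $\Ac^0=\ku_\psi F$ is semisimple (characteristic zero), and one must check that the associated graded $\gr'\Ac$ is Loewy-graded; but $\gr'\Ac$ is visibly generated in degrees $0$ and $1$ with $\gr'\Ac(1)\simeq V\#\ku_\psi F$, and it maps onto $\kc(W,\psi,F)$ with $W=V$, i.e.\ onto $U$ itself, and by the dimension count from part (1) this map is an isomorphism, so $\gr'\Ac\simeq U$ is coradically graded, hence Loewy-graded. Lemma \ref{loewy=grad} then yields $\Ac_n=\Ac^n$, which is \eqref{l-filt}; for the subalgebra $\Ac(W,F,\psi,\xi,\alpha)$ one intersects with this filtration.

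For part (3), combine (2) with the identification of the associated graded: by part (2) the Loewy filtration on $\Ac(W,F,\psi,\xi,\alpha)$ is spanned by $e_f w_1\cdots w_k$ with $w_j\in W$, $k\le n$, so $\gr\Ac(W,F,\psi,\xi,\alpha)$ is generated in degrees $0$ and $1$ with degree-$1$ part $W\#\ku_\psi F$ and satisfies $\Delta(\gr\Ac(n))\subseteq\oplus_{i=0}^n U(i)\otk\gr\Ac(n-i)$ (this last being the comodule-algebra compatibility transported to the associated graded). The relations among the classes $\bar v_i$ in $\gr\Ac$ are the quantum-linear-space relations $\bar v_i^{N_i}=0$, $\bar v_i\bar v_j=q_{ij}\bar v_j\bar v_i$, since the deformation terms drop to lower Loewy degree; hence there is a surjection $\kc(W,\psi,F)\twoheadrightarrow\gr\Ac(W,F,\psi,\xi,\alpha)$ of graded $U$-comodule algebras. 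Equality of dimensions on each graded piece — which for $W=V$ is part (1) and in general follows because the PBW basis $\{e_f\,w_1^{r_1}\cdots\}$ of $\Ac(W,F,\psi,\xi,\alpha)$ has the same cardinality as the basis of $\kc(W,\psi,F)$ — forces this surjection to be an isomorphism.

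The main obstacle is the PBW/dimension control underlying all three parts: one really needs that the relations \eqref{relations1}--\eqref{relations3} are confluent so that $\Ac(V,F,\psi,\xi,\alpha)$ has the expected basis and no collapse occurs. The cleanest route is probably to avoid a direct diamond-lemma computation and instead run the argument in reverse — first prove part (2)'s filtration is an algebra filtration with semisimple bottom and with $\gr'$ \emph{at most} $\kc(V,\psi,F)=U$, deduce $\dim\Ac\le\dim U$, then produce an explicit surjection $\Ac(V,F,\psi,\xi,\alpha)\twoheadrightarrow$ a known comodule algebra of dimension $\dim U$ (or use the cleaving map of part (1)) to get the reverse inequality — so that the Skryabin freeness theorem (Theorem \ref{freeness}) is not even needed here, only linear algebra.
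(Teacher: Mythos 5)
There is a genuine gap, concentrated in two places.

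First, in part (1) your primary argument rests on the claim that finite dimensionality together with $\Ac^{\co U}=\ku$ already makes $\Ac(V,\Gamma,\psi,\xi,\alpha)$ cleft, hence Hopf--Galois. This is false as a general principle: take $H=\ku G$ with $G$ cyclic of order $2$ generated by $g$, and $A=\ku[y]/(y^2)$ with $\lambda(y)=g\ot y$; then $A^{\co H}=\ku$, $\dim A=\dim H$, and $A\simeq H$ as comodules, yet the canonical map $A\otk A\to H\otk A$ has image spanned by $1\ot 1,\,1\ot y,\,g\ot y$ and is not surjective, so $A$ is not Galois. Your fallback (an explicit cleaving map $x_1^{r_1}\cdots x_\theta^{r_\theta}\# g\mapsto v_1^{r_1}\cdots v_\theta^{r_\theta}e_g$, checked to be a convolution-invertible comodule map) would work, but you do not carry it out. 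The paper's proof is both shorter and does more: it shows the canonical map $\beta$ is surjective by exhibiting preimages of the algebra generators, namely $\beta(e_g\ot e_{g^{-1}})=g\ot 1$ and $\beta(v_i\ot 1-e_{g_i}\ot e_{g_i^{-1}}v_i)=x_i\ot 1$. Since surjectivity of $\beta:\Ac\otk\Ac\to U\otk\Ac$ forces $\dim\Ac\ge\dim U$, this argument simultaneously supplies the lower bound on $\dim\Ac$ that your whole proposal hinges on, with no diamond-lemma input.

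Second, the PBW/no-collapse issue you correctly single out at the end is not actually closed by your suggested workaround. For a proper subgroup $F\subsetneq\Gamma$ the algebra $\Ac(V,F,\psi,\xi,\alpha)$ is not a $\ku$-Galois object (its dimension is $|F|\prod_iN_i$, not $\dim U$), and a datum $(\xi,\alpha)$ compatible with $(F,\psi)$ need not remain compatible for any extension of $\psi$ to $\Gamma$, so you cannot in general embed it into the Galois object of part (1) to import the dimension count; moreover your surjection $\kc(W,\psi,F)\twoheadrightarrow\gr\Ac(W,F,\psi,\xi,\alpha)$ presupposes a presentation of the subalgebra $\kc(W)\subseteq\qs$ for an arbitrary subcomodule $W$, which you do not have. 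The paper's proof of (3) avoids all of this: it observes that $\gr\Ac(W,F,\psi,\xi,\alpha)$ is a Loewy-graded comodule algebra with degree-zero part $\ku_\psi F$ and degree-one part $W\otk\ku F$, applies Theorem \ref{descomposition} to realize it as $\nic_A\#\ku_\psi F$ with $\nic_A$ a homogeneous subalgebra of $\qs$ satisfying the coideal condition, and then invokes Proposition \ref{generation-in-degree-1} to force $\nic_A=\kc(W)$ --- no presentation and no dimension count required. Your treatment of part (2) (verifying the hypotheses of Lemma \ref{loewy=grad} for the filtration \eqref{l-filt}) does follow the paper's route and is fine, except that the verification that $\gr'\Ac$ is Loewy-graded again silently depends on the unresolved identification of $\gr'\Ac$ above.
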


\pf The proof of (1) is standard. One must show that the canonical
map $\beta: \Ac(V, G, \psi, \xi, \alpha)\otk \Ac(V, G, \psi, \xi,
\alpha)\to U\otk \Ac(V, G, \psi, \xi, \alpha)$, $\beta(a\ot
b)=a\_{-1}\ot a\_0 b$ is bijective. For this it is enough to show
that the elements $g\ot 1$ and $x_i\ot 1$ are in the image of
$\beta$ for all $g\in G$, $i=1,\dots ,\theta$, and this follows
because $\beta(e_g\ot e_{g^{-1}})=g\ot 1$, $\beta(v_i\ot 1- e_{
h_i}\ot e_{ h^{-1}_i}v_i)=x_i\ot 1$.

\smallbreak

 The
filtration on $\Ac=\Ac(V, F, \psi, \xi, \alpha)$ defined by
\eqref{l-filt} satisfies the hypothesis in Lemma \ref{loewy=grad},
hence it coincides with the Loewy filtration. This proves (2).
\smallbreak

The algebra $\gr \Ac(W, F, \psi, \xi, \alpha)$ is a Loewy-graded
$U$-comodule algebra satisfying
$$\gr \Ac(W, F, \psi, \xi, \alpha)(0)=\ku_{\psi} F,\;\;\;
\gr \Ac(W, F, \psi, \xi, \alpha)(1)= W\otk \ku F.$$ Thus (3)
follows from Theorem \ref{descomposition} and Proposition
\ref{generation-in-degree-1}. \epf

Now we can state the main result of this section.

\begin{teo}\label{clasification-qspaces} Let
$\theta\in \Na$,  $\Gamma$ be a finite Abelian group, $g_1,
\dots, g_{\theta}\in \Gamma$, $\chi_1,
\dots,\chi_{\theta}\in\widehat{\Gamma}$ be a datum for a quantum linear space, with associated Yetter-Drinfeld
module over $\ku \Gamma$ $V=V(g_1,\dots,
g_{\theta},\chi_1, \dots,\chi_{\theta})$ and $U=\nic(V)\# \ku\Gamma$.
  
 If $\Mo$ is an exact indecomposable module category
over $\Rep(U)$ then there exists a subgroup $F\subseteq \Gamma$, a
2-cocycle $\psi\in Z^2(F, \ku^{\times})$, a compatible datum
$(\xi,\alpha)$ and $W\subseteq V$ a subcomodule invariant under
the action of $F$ such that $\Mo\simeq {}_{\Ac(W,F, \psi, \xi,
\alpha)}\Mo$ as module categories.

\end{teo}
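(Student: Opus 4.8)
The plan is to combine Theorem \ref{mod-overhopf}, the structural Theorem \ref{descomposition}, the generation result Proposition \ref{generation-in-degree-1}, and a lifting argument using Lemma \ref{lambda-on-V}. By Theorem \ref{mod-overhopf} we may assume $\Mo\simeq {}_A\Mo$ where $A$ is a right $U$-simple left $U$-comodule algebra with $A^{\co U}=\ku$. Applying Theorem \ref{descomposition} to $H=U$ (which is coradically graded, so $\gr U = U$) we obtain a graded subalgebra $\nic_A=\oplus_{i=0}^m\nic_A(i)\subseteq\nic(V)=\qs$, a subgroup $F\subseteq\Gamma$, and a $2$-cocycle $\psi\in Z^2(F,\ku^\times)$ with $\nic_A(0)=\ku$, $\nic_A(1)=W\subseteq V$ a $\ku\Gamma$-subcomodule stable under $F$, $\Delta(\nic_A(n))\subseteq\oplus_{i=0}^n U(i)\otk\nic_A(n-i)$, and $\gr A\simeq \nic_A\#\,\ku_\psi F$ as $U$-comodule algebras.

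Next I would apply Proposition \ref{generation-in-degree-1} to $K=\nic_A$: its hypotheses are exactly conditions (1)--(3) just listed, so $\nic_A\simeq\kc(W)$, and therefore by the remark following Theorem \ref{descomposition}, $\gr A\simeq\kc(W)\#\,\ku_\psi F=\kc(W,\psi,F)$. In particular $\gr A$ is a Loewy-graded $U$-comodule algebra $G=\oplus_i G(i)$ with $G(0)\simeq\ku_\psi F$ and $G(1)\simeq W\#\ku_\psi F$; choosing a basis $\{x_i\}_{i\in I_W}$ of $W$ adapted to the $\Gamma$-grading and $\Gamma$-action (as in the setup of Lemma \ref{lambda-on-V}), we get elements $y_i\in G(1)$ with $\lambda_0(y_i)=x_i\ot 1+g_i\ot y_i$.

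Now comes the lifting step: $A$ is a left $U$-comodule algebra with $\gr A = G$, so Lemma \ref{lambda-on-V} produces elements $v_i\in A_1$ lifting the $y_i$ with $\lambda(v_i)=x_i\ot 1+g_i\ot v_i$ and $e_f v_i=\chi_i(f)\,v_i e_f$ for $f\in F$, where $\{e_f\}$ is a basis of $A_0=\ku_\psi F$. The relations \eqref{relations1} hold by construction. For \eqref{relations2} and \eqref{relations3}: since $\gr A$ satisfies the defining relations of $\kc(W,\psi,F)$, the elements $v_iv_j-q_{ij}v_jv_i$ and $v_i^{N_i}$ lie in $A_0=\ku_\psi F$; comparing $U$-coactions (using that $\lambda$ is an algebra map and computing $\lambda$ of these expressions, which must land in $\ku\Gamma\otk A_0$) forces $v_iv_j-q_{ij}v_jv_i$ to be a scalar multiple of $e_{g_ig_j}$ when $g_ig_j\in F$ and zero otherwise, and similarly $v_i^{N_i}=\xi_i e_{g_i^{N_i}}$ or $0$. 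The compatibility conditions \eqref{parameters1}--\eqref{parameters2} on the resulting scalars $\xi_i,\alpha_{ij}$ then follow by applying the relations $e_f v_i=\chi_i(f)v_i e_f$ and $e_f e_g=\psi(f,g)e_{fg}$ to both sides, i.e. conjugating by $e_f$: the left side picks up $\chi_i\chi_j(f)$ (resp.\ $\chi_i^{N_i}(f)$) while the right side picks up $\psi_{g_ig_j}(f)$ (resp.\ $\psi_{g_i^{N_i}}(f)$), so the scalar must vanish unless these agree. This gives a surjective $U$-comodule algebra map $\Ac(W,F,\psi,\xi,\alpha)\to A$; it is injective because both algebras have the same dimension, namely $|F|\cdot\dim\kc(W)$ (by Proposition \ref{propertiesA1}(2) the Loewy filtration of $\Ac(W,F,\psi,\xi,\alpha)$ has associated graded $\kc(W,\psi,F)$, matching $\gr A$). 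Hence $A\simeq\Ac(W,F,\psi,\xi,\alpha)$ as $U$-comodule algebras and $\Mo\simeq{}_{\Ac(W,F,\psi,\xi,\alpha)}\Mo$.

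The main obstacle I anticipate is the lifting step: extracting the precise scalars and verifying the compatibility conditions \eqref{parameters1}--\eqref{parameters2}. One must carefully compute $\lambda(v_iv_j-q_{ij}v_jv_i)$ and $\lambda(v_i^{N_i})$ in $U\otk A$, using the quantum binomial formula \eqref{qbinom} to see that the ``top-degree'' terms cancel (so the expressions indeed lie in $A_0$), and then track the $\ku\Gamma$-component to identify which group element $e_f$ appears. The conjugation-by-$e_f$ argument forcing the compatibility conditions is the other delicate point, since it requires knowing that $v_i v_j - q_{ij} v_j v_i$ is a specific scalar times a specific $e_f$ rather than an arbitrary element of $\ku_\psi F$ — this is where the $U$-comodule structure, not just the algebra structure, is essential. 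Everything else is bookkeeping with dimensions and the already-established structural results.
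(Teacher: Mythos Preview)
Your argument for the case $W=V$ matches the paper's exactly. The gap is in the case $W\subsetneq V$. You write ``choosing a basis $\{x_i\}_{i\in I_W}$ of $W$ adapted to the $\Gamma$-grading and $\Gamma$-action'', which implicitly assumes that $W$ admits a basis of simultaneous $\ku\Gamma$-comodule and $\Gamma$-module eigenvectors --- essentially that $W$ is a coordinate subspace in the original basis $\{x_1,\dots,x_\theta\}$. But $W$ is only required to be a $\ku\Gamma$-subcomodule stable under $F$, not under all of $\Gamma$; when some $V_g$ has dimension $>1$ and $W_g$ is a proper nonzero subspace, a basis element of $W_g$ may fail to be a $\Gamma$-eigenvector, so Lemma~\ref{lambda-on-V} does not apply as stated. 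More seriously, $\Ac(W,F,\psi,\xi,\alpha)$ is \emph{defined} as the subalgebra of $\Ac(V,F,\psi,\xi,\alpha)$ generated by $W$ and $\{e_f\}$, so the datum $(\xi,\alpha)$ is indexed by all of $\{1,\dots,\theta\}$; your construction only produces scalars for pairs coming from a basis of $W$, and you do not explain how to promote these to a full compatible datum for $V$ and then identify the algebra you have built with the subalgebra picture.

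The paper circumvents both issues by an embedding trick. After establishing $\gr\Ac\simeq\kc(W,\psi,F)$ it chooses a complement $W'$ of $W$ in $V$ (as $\ku\Gamma$-subcomodule and $F$-submodule), sets $\Do=\kc(W')\otk\Ac$ with an explicit crossed-product multiplication, and checks that $\gr\Do\simeq\kc(V,\psi,F)$. Now the already-proved case $W=V$ applies to $\Do$, yielding $\Do\simeq\Ac(V,F,\psi,\xi,\alpha)$ for some full datum $(\xi,\alpha)$; since $\Ac$ sits inside $\Do$ as the subalgebra generated by $W$ and $\ku F$, one concludes $\Ac\simeq\Ac(W,F,\psi,\xi,\alpha)$ directly from the definition. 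This reduction to the $W=V$ case is the idea missing from your sketch.
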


\pf By Theorem \ref{mod-overhopf} there exists a right $U$-simple
left $U$-comodule algebra $\lambda:\Ac \to H\otk \Ac$ with trivial
coinvariants such that $\Mo\simeq {}_{\Ac}\Mo$ as module
categories over $\Rep(U)$. Since $U_0=\ku \Gamma$, and $\Ac_0$ is
a right $U_0$-simple left $U_0$-comodule algebra \cite[Proposition
4.4]{M2} then $\Ac_0=\ku_{\psi} F$ for some subgroup $F\subseteq
G$ and a 2-cocycle $\psi\in Z^2(F, \ku^{\times})$. Thus we may
assume that $\Ac\neq \Ac_0$. By Theorem \ref{descomposition} there
exists an homogeneous coideal subalgebra $\nic_A\subseteq \qs$
such that
 $\gr \Ac\simeq \nic_A\# \ku_{\psi} F$. Proposition
\ref{generation-in-degree-1} implies that $\nic_A=\kc(W)$ for some
$\ku\Gamma$-subcomodule $W\subseteq V$ invariant under the action
of $F$, thus $\gr \Ac\simeq \kc(W,\psi,F)$. Since $\Ac\neq \Ac_0$
the space $W$ is not zero.

\smallbreak

Let us assume first that $\gr \Ac\simeq \kc(V,\psi,F)$. By Lemma
\ref{lambda-on-V}, there are elements $\{v_i: i=1\dots\theta\}$ in
$\Ac$ such that for all $f\in F$
$$\lambda(v_i)=x_i\ot 1+ g_i\ot v_i,\quad e_f v_i =  \chi_i(f)\; v_i e_f.$$
Since $\gr \Ac$ is generated as an algebra by $V$ and $\ku_{\psi}
F$ then $\Ac$ is generated as an algebra by the elements $\{v_i:
i=1\dots\theta\}$  and $\ku_{\psi} F$. Since $x_i\ot 1$ and
$g_i\ot v_i$ $q_i$-commute then the quantum binomial formula
implies that
$$\lambda(v^{N_i}_i)=g^{N_i}_i\ot v^{N_i}_i,$$
thus $v^{N_i}_i\in \Ac_0$ and there exists $\xi_i\in \ku$ such
that $v^{N_i}_i=\xi_i\, e_{g^{N_i}_i}$ if $g^{N_i}_i\in F$,
otherwise $v^{N_i}_i=0.$ If $i\neq j$ then
$\lambda(v_iv_j-q_{ij}\; v_j v_i)=g_i g_j\ot (v_iv_j-q_{ij}\; v_j
v_i).$ Hence $v_iv_j-q_{ij}\; v_j v_i\in \Ac_0$, and therefore
$v_iv_j-q_{ij}\; v_j v_i=\sum_{f\in F}\; \zeta_f\, e_f.$ Thus we
conclude that if $g_i g_j\in F$ then there exists $\alpha_{ij}\in
\ku$ such $v_iv_j-q_{ij}\; v_j v_i=\alpha_{ij}\;e_{g_i g_j}$, and
if $g_i g_j\notin F$ then $v_iv_j-q_{ij}\; v_j v_i=0$. It is clear
that $(\xi, \alpha)$ is compatible with the quantum linear space
and $\psi$, therefore there is a projection $\Ac(V, G, \psi, \xi,
\alpha)\twoheadrightarrow \Ac$ of $U$-comodule algebras, but both
algebras have the same dimension, since by Proposition
\ref{propertiesA1} (3) $\gr \Ac\simeq \gr \Ac(V, G, \psi, \xi,
\alpha)$, thus they are isomorphic.

\medbreak

If $\gr \Ac\simeq \kc(W,\psi,F)$ for some $\ku\Gamma$-subcomodule
$W\subseteq V$ invariant under the action of $F$ we proceed as
follows. We shall define an $U$-comodule algebra $\Do$ such that
$\gr \Do= \kc(V,\psi,F)$ such that $\Ac$ is a $U$-subcomodule
algebra of $\Do$, and this will finish the proof of the theorem
since $\Do\simeq \Ac(V,F, \psi, \xi, \alpha)$ and by definition
$\Ac(W,F, \psi, \xi, \alpha)$ is the subcomodule algebra of
$\Ac(V,F, \psi, \xi, \alpha)$ generated by $W$ and $\ku F$.

\medbreak

Using again Lemma \ref{lambda-on-V} there is an injective map
$W\hookrightarrow \Ac_1$ such that for any $h\in \Gamma$, $w\in
W_h$
$$ \lambda(w)=w\#1 \ot 1 + 1\#h \ot w.$$
Observe that here we are abusing of the notation since the element
$w$ also denotes the element in $\Ac_1$ under the above inclusion.
Using this identification $\Ac$ is generated as an algebra by $W$
and $\ku F$.

 Let $W'\subseteq V$
be a $\ku\Gamma$-subcomodule and an $F$-submodule such that
$V=W'\oplus W$. Set $\Do= \kc(W')\otk \Ac$, with algebra structure
determined by
$$ (1\ot a)(1\ot b)=1\ot ab,\;\; (x\ot 1)(y\ot 1)=xy\ot 1,
\;\;(x\ot 1)(1\ot a)= x\ot a, $$
$$(1\ot e_f)(v\ot 1)=f\cdot v\ot e_f,\;\; (1\ot w)(v\ot 1)=
q_{h,g} (v\ot w), $$ for any $a, b\in \Ac$, $x,y\in \kc(W')$,
$f\in F$, $h, g\in \Gamma$, $w\in W'_h$, $v\in W_g$. Here the
scalar $q_{h,g}\in \ku$ is determined by the equation in $U$: $wv=
q_{h,g} \, vw$, see Remark \ref{qcommutation}. Let us define
$\widetilde{\lambda}:\Do\to U\ot_{\ku} \Do$ the coaction by:
$$\widetilde{\lambda}(x\ot a)= x\_{-1}a\_{-1}\ot x\_0\ot a\_0,$$
for all $a\in \Ac$, $x\in \kc(W')$. By a direct computation one
can see that $\widetilde{\lambda}$ is an algebra map. It is not
difficult to see that $\Do_0=\ku_{\psi} F$ and that $\gr
\Do(1)=V\otk \ku F$, thus $\gr \Do\simeq \kc(V,\psi,F)$.\epf

\begin{exa} This example is a particular case of a classification
result obtained in \cite[\S 4.2]{eo} for the representation category of finite supergroups.

Let $\theta\in \Na$, $\Gamma$ be an Abelian group and $u\in
\Gamma$ be an element of order 2. Set $g_1=\dots=g_{\theta}=u$ and
for any $i=1,\dots,\theta$ let $\chi_i\in \widehat{\Gamma}$ be 
characters such that $\chi_i(u)=-1$. If $V=V(g_1,\dots,
g_{\theta},\chi_1, \dots,\chi_{\theta})$ then the associated
quantum linear space is the exterior algebra $\wedge V$. In this case $U=\wedge V\#  \ku \Gamma$.

\medbreak

Let $F\subseteq \Gamma$ be a subgroup and $\psi\in
Z^2(F,\ku^{\times})$ a 2-cocycle. A compatible comodule algebra
datum in this case is a pair $(\xi,\alpha)$
 satisfying
\begin{align}\label{sym-form} \xi_{i}=0\; \text{ if }\;
\chi^{2}_i(f)\neq 1,\quad \alpha_{ij}=0\; \text{ if }\;
 \chi_i\chi_j(f)\neq 1,\;\; \text{for all}\; f\in F.
\end{align}
If $W\subseteq V$ is a subspace stable under the action of $F$ the
algebra $\Ac(W, F, \psi, \xi, \alpha)$ is isomorphic to the
semidirect product $Cl(W, \beta)\# \ku_{\psi} F$ where $Cl(W,
\beta)$ is the Clifford algebra  associated to the symmetric
bilinear form $\beta:V\times V\to \ku$ invariant under $F$ defined
by
\begin{align}\label{sym-form2}\beta(v_i, v_j)=\begin{cases}\frac{\alpha_{ij}}{2} \;\;\; \text{ if } i\neq j\\
\xi_i \;\;\;\;\;\; \text{ if } i=j .\end{cases}\end{align} Reciprocally, if
$W\subseteq V$ is a $F$-submodule, any symmetric bilinear form
$\beta:W\times W\to \ku$ invariant under $F$ defines a comodule
algebra datum $(\xi,\alpha)$. Indeed, take $U\subseteq V$ a
$F$-submodule such that $V=W\oplus U$ and define
$\widehat{\beta}:V\times V\to \ku$ such that $\widehat{\beta}(w_1,
w_2)=\beta(w_1, w_2)$ if $w_1, w_2\in W$ and $\widehat{\beta}(v,
u)=0$ for any $v\in V$, $u\in U$. Follows that the pair
$(\xi,\alpha)$ defined by equation \eqref{sym-form2} using
$\widehat{\beta}$ gives a compatible comodule algebra datum.

\end{exa}

\begin{rmk} It would be interesting to give an explicit
description of data $(W,F, \psi, \xi, \alpha)$ such  that  the
algebra $\Ac(W,F, \psi, \xi, \alpha)$ is simple. This would  give
a description  of  twists  over  $U$, \emph{i.e.} fiber functors
for $\Rep(U)$.
\end{rmk}

\subsection{Equivariant  equivalence classes of algebras $\Ac(W,F, \psi, \xi, \alpha)$}\label{moritaeq}

In this section we shall distinguish equivalence classes of module categories of Theorem \ref{clasification-qspaces}, that is equivariant Morita equivalence classes of the algebras $\Ac(W,F, \psi, \xi, \alpha)$.

\smallbreak

Let $U$ be the Hopf algebra as in the previous section.
Let $W, W'\subseteq V$ be subcomodules, $F, F'\subseteq \Gamma$ be two subgroups, $\psi\in Z^2(F,
\ku^{\times})$, $\psi'\in Z^2(F',
\ku^{\times})$  2-cocycles and $(\xi, \alpha)$, $(\xi',
\alpha')$ compatible comodule algebra datum with respect
to the quantum linear space $\qs$, the 2-cocycles $\psi$, $\psi'$ and the groups $F, F'$ respectively.

\begin{teo}\label{morita-equivariant} The associated right simple left  $U$-comodule algebras to these data $\Ac(W,F, \psi, \xi, \alpha)$, $\Ac(W',F', \psi', \xi', \alpha')$ are equivariantly Morita equivalent if and only if $(W,F, \psi, \xi, \alpha)=(W',F', \psi', \xi', \alpha')$.
\end{teo}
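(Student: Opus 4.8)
The plan is to prove both implications separately, the ``if'' direction being trivial. For the ``only if'' direction, suppose $\Ac=\Ac(W,F, \psi, \xi, \alpha)$ and $\Ac'=\Ac(W',F', \psi', \xi', \alpha')$ are equivariantly Morita equivalent. By Proposition \ref{eq1} there exists $P\in {}^{U\!}\Mo_{\Ac'}$ with $\Ac\simeq \End_{\Ac'}(P)$ as $U$-comodule algebras. The first step is to recover $F'$ and $\psi'$: by Theorem \ref{freeness} some power $P^t$ is a free $\Ac'$-module, so $\End_{\Ac'}(P^t)\simeq M_t(\Ac'^{\op})$ or similar, and passing to Loewy degree zero one compares $\Ac_0=\ku_\psi F$ with a matrix algebra over $\Ac'_0=\ku_{\psi'}F'$; an equivariant version (using that the comodule structure at degree $0$ is by the group elements) should force $F=F'$ and $[\psi]=[\psi']$, hence $\psi=\psi'$ since both are normalized with $\psi(g^{-1},g)=1$. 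The second step recovers $W$: the comodule algebra map $\lambda$ on $\Ac$ and $\Ac'$ determines, at Loewy degree $1$, the $\ku\Gamma$-subcomodule of $V$ appearing, namely $W$ for $\Ac$ and $W'$ for $\Ac'$; since the Loewy filtration and the associated graded are Morita-invariant in the equivariant sense (by Proposition \ref{propertiesA1}(3), $\gr\Ac\simeq\kc(W,\psi,F)$, and $\gr$ of an endomorphism algebra recovers $\gr$ of the original when $P$ is suitably free), one gets $\kc(W,\psi,F)\simeq\kc(W',\psi',F')$ as graded $U$-comodule algebras, and reading off degree $1$ gives $W=W'$ as subcomodules of $V$.

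The third and most delicate step is recovering the scalars $\xi$ and $\alpha$. Here the graded information is not enough, since $\gr\Ac$ forgets these parameters. The approach is to use the explicit form of the $U$-comodule algebra isomorphism $\phi:\Ac\to\End_{\Ac'}(P)$ together with the defining relations \eqref{relations2}, \eqref{relations3}. Having already identified $F=F'$, $\psi=\psi'$, $W=W'$, one can try to normalize $P$: I would argue that $P$ may be taken to be $\Ac'$ itself as a $({}^U,\Ac')$-bimodule, i.e. that the only $P$'s giving an equivariant self-Morita-equivalence class come from the regular module, because $\Ac'$ is $U$-simple with $\Ac'^{\co U}=\ku$ (a point-like comodule algebra), so ${}^U\Mo_{\Ac'}$ has a unique ``invertible'' object up to the obvious ambiguity. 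Once $P\simeq \Ac'$, the isomorphism $\Ac\simeq\End_{\Ac'}(\Ac')\simeq\Ac'$ is a $U$-comodule algebra isomorphism; tracking the generators $v_i$ (which are pinned down up to scalar by $\lambda(v_i)=x_i\ot 1+g_i\ot v_i$ and the commutation $e_f v_i=\chi_i(f)v_ie_f$, exactly as in Lemma \ref{lambda-on-V}) and the $e_f$ (pinned down by $\lambda(e_f)=f\ot e_f$ and $e_fe_g=\psi(f,g)e_{fg}$) through the isomorphism, one evaluates \eqref{relations2} and \eqref{relations3} on both sides and concludes $\alpha_{ij}=\alpha'_{ij}$ and $\xi_i=\xi'_i$.

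The main obstacle I anticipate is precisely the normalization of $P$: showing that an equivariant Morita self-equivalence of a point-like $U$-simple comodule algebra is essentially trivial, or at least trivial enough that the rescaling freedom in the $v_i$ and $e_f$ does not move the scalars $\xi,\alpha$. One has to control how much the isomorphism $\phi$ can rescale $v_i\mapsto c_i v_i$ and $e_f\mapsto d_f e_f$: compatibility with $\lambda$ forces $c_i$ and $d_f$ to be compatible with the group law and the characters, and then feeding this into \eqref{relations2}--\eqref{relations3} shows the combinations $\alpha_{ij}$, $\xi_i$ are invariant (the relation $v_i^{N_i}=\xi_i e_{g_i^{N_i}}$ becomes $c_i^{N_i}v_i^{N_i}=\xi_i d_{g_i^{N_i}}e_{g_i^{N_i}}$, and the constraint from $\lambda$ should give $c_i^{N_i}=d_{g_i^{N_i}}$, so $\xi_i$ is unchanged, and similarly for $\alpha_{ij}$ using $c_ic_j=d_{g_ig_j}$). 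If the regular-module normalization cannot be achieved directly, the fallback is to work with $\End_{\Ac'}(P)$ abstractly, using formula \eqref{h-comod} for its comodule structure to extract the same scalar identities. Finally, one invokes Lemma \ref{equivalencia-cocentral} only if needed to simplify the comparison, though I expect it is not required here.
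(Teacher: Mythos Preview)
Your outline correctly identifies the heart of the matter: once one knows that $P$ can be normalized to the regular module (up to a twist by a group-like), an isomorphism $\Ac\simeq\Ac'$ of $U$-comodule algebras follows, and then the data must coincide. However, you do not prove this normalization, and the argument you sketch (``$\Ac'$ is $U$-simple with trivial coinvariants, so invertible objects are essentially unique'') is not a proof. This is the genuine gap.

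The paper fills it as follows. Work with the full equivariant Morita context $(P,Q,f,g)$, not just $P$. Filter $P$ by $P_i=\delta^{-1}(U_i\otimes P)$ and show, by a short quotient argument, that $P=P_0\cdot\Ac$. Since $P_0\in{}^{\ku\Gamma}\Mo_{\ku_\psi F}$, Lemma \ref{equivalencia-cocentral} gives $P_0\simeq N\otimes\ku_\psi F$ with $N$ a $\ku(\Gamma/F)$-comodule, and one checks $\dim P=\dim N\cdot\dim\Ac$; analogously $\dim Q=s\,\dim\Ac'$. Now apply Skryabin's Theorem \ref{freeness} to obtain $P^t\simeq M\otimes\Ac$ free, so $t\,\dim N=\dim M$. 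Tensoring with $Q$ and using $P\otimes_{\Ac}Q\simeq\Ac'$ gives $\dim M\cdot\dim Q=t\,\dim\Ac'$, hence $s\,\dim M=t$, hence $s\,\dim N=1$ and $\dim N=1$. This is the step your proposal is missing: using only $P$ and Skryabin you cannot force the rank down to $1$; the inverse bimodule $Q$ is essential.

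With $\dim N=1$ one has $P\simeq v\cdot\Ac$ for some $v$ with $\delta(v)=g\otimes v$, and then $\Ac'\simeq\End_{\Ac}(P)\simeq g\Ac g^{-1}=\Ac$ (the group is abelian) as $U$-comodule algebras. This renders your separate steps 1 and 2 unnecessary: you do not need to recover $F,\psi,W$ first via a graded-Morita argument (which you did not justify), because the comodule algebra isomorphism $\Ac\simeq\Ac'$ already contains that information. The paper extracts all the data at once via Lemma \ref{iso-alg}, whose proof extends the isomorphism to $\Ac(V,F,\psi,\xi,\alpha)$ and observes that such an extension must be the identity.

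A small remark on your rescaling analysis: the comodule constraint is stronger than you allow. Since $\lambda(\phi(v_i))=x_i\otimes 1+g_i\otimes\phi(v_i)$, the coefficient of $x_i\otimes 1$ is fixed to $1$, so $\phi(v_i)=v'_i+z_i$ with $z_i$ group-like of type $g_i$ (not $\phi(v_i)=c_iv'_i$ with $c_i$ free). This does not invalidate the strategy, but it shows the bookkeeping is different from what you wrote; the paper's Lemma \ref{iso-alg} handles this cleanly by passing to the algebra with $W=V$.
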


We shall need first the following result.

\begin{lema}\label{iso-alg} The algebras $\Ac(W,F, \psi, \xi, \alpha)$, $\Ac(W',F', \psi', \xi', \alpha')$ are isomorphic as left $U$-comodule algebras if and only if $W=W'$, $F=F'$,  $\psi=\psi'$, $\xi=\xi'$ and $\alpha=\alpha'$.
\end{lema}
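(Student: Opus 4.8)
The "if" direction is essentially trivial: if the data coincide then the algebras are literally defined by the same generators and relations, so the identity on generators gives a $U$-comodule algebra isomorphism. The substance is in the "only if" direction, so suppose $\phi:\Ac(W,F,\psi,\xi,\alpha)\to\Ac(W',F',\psi',\xi',\alpha')$ is an isomorphism of left $U$-comodule algebras. The plan is to recover each piece of the datum from structural invariants of the comodule algebra that $\phi$ must preserve.

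First I would identify $F$ and $\psi$. By Proposition \ref{propertiesA1}(2) the Loewy filtration is intrinsic, and $\Ac(W,F,\psi,\xi,\alpha)_0=\ku_\psi F$ as a left $U$-comodule algebra; since $\phi$ respects the Loewy filtration (any comodule algebra map does, as the filtration is defined via the coaction) it restricts to an isomorphism $\ku_\psi F\cong\ku_{\psi'}F'$ of left $\ku\Gamma$-comodule algebras. A $\ku\Gamma$-comodule algebra isomorphism between twisted group algebras of subgroups of $\Gamma$ forces the subgroups to be equal (the comodule grading picks out the group elements as the set of grouplike supports) and forces $\psi=\psi'$ \emph{as cocycles} — here one uses the normalization conventions fixed in the preliminaries ($\psi$ normalized, $\psi(g^{-1},g)=1$), which pin down the basis elements $e_f$ up to scalars that the comodule structure then rigidifies. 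So $F=F'$ and $\psi=\psi'$.

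Next I would recover $W$. Pass to the associated graded: by Proposition \ref{propertiesA1}(3), $\gr\Ac(W,F,\psi,\xi,\alpha)\cong\kc(W,\psi,F)$ as left $U$-comodule algebras, and $\gr\phi$ is an isomorphism $\kc(W,\psi,F)\cong\kc(W',\psi,F)$. Taking the degree-one component (intrinsic, by Theorem \ref{descomposition}) gives an isomorphism of $\ku\Gamma$-comodules $W\otk\ku F\cong W'\otk\ku F$ compatible with the coaction $\lambda(w\otimes e_f)=wf\otimes(w\otimes e_f)$; comparing the $U(1)$-component of the coaction, i.e.\ looking at the image in $V\#\ku\Gamma$ under $\lambda$ followed by projection onto $\nic(V)(1)\#\ku\Gamma$, identifies $W$ and $W'$ as the \emph{same} subspace of $V$ (not merely isomorphic ones), since the coaction literally remembers which vectors of $V$ occur. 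Hence $W=W'$.

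Finally, with $W=W'$, $F=F'$, $\psi=\psi'$ in hand, I would pin down $\xi$ and $\alpha$ by applying Lemma \ref{lambda-on-V}: the elements $v_i$ with $\lambda(v_i)=x_i\otimes 1+g_i\otimes v_i$ and $e_f v_i=\chi_i(f)v_i e_f$ are, by the uniqueness argument in that lemma (the spaces $\Pc_{i,f}$ are one-dimensional), determined up to a scalar; so $\phi(v_i)=c_i v_i$ for some $c_i\in\ku^\times$ — but matching the coaction $\lambda(\phi(v_i))=x_i\otimes 1+g_i\otimes\phi(v_i)$ forces $c_i=1$. Then $\phi(v_i^{N_i})=v_i^{N_i}$ and $\phi(v_iv_j-q_{ij}v_jv_i)=v_iv_j-q_{ij}v_jv_i$, and reading off relations \eqref{relations2}--\eqref{relations3} on both sides gives $\xi_i e_{g_i^{N_i}}=\xi_i' e_{g_i^{N_i}}$ and $\alpha_{ij}e_{g_ig_j}=\alpha_{ij}'e_{g_ig_j}$ inside $\ku_\psi F$, whence $\xi=\xi'$ and $\alpha=\alpha'$. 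I expect the main obstacle to be the bookkeeping in the second step — making precise that an abstract $\ku\Gamma$-comodule isomorphism of the degree-one parts forces equality of the subspaces $W,W'$ of $V$ rather than just isomorphism — and ensuring the normalization conventions genuinely remove the scalar ambiguity in the $e_f$'s; the $\xi,\alpha$ step is then routine once the rigidity of the $v_i$ from Lemma \ref{lambda-on-V} is invoked.
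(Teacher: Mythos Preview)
Your plan is sound and would go through. The paper, however, takes a shorter and somewhat different route once $F=F'$ and $\psi=\psi'$ are in hand: instead of separately recovering $W$ via the associated graded and then pinning down $(\xi,\alpha)$ through Lemma~\ref{lambda-on-V}, it picks a complement $\widetilde{W}$ of $W$ in $V$ in ${}^{\ku\Gamma}_{\ku F}\Mo$ and \emph{extends} $\Phi$ to a map $\widetilde{\Phi}:\Ac(V,F,\psi,\xi,\alpha)\to\Ac(V,F,\psi,\xi',\alpha')$ by declaring $\widetilde{\Phi}(u)=u$ for $u\in\widetilde{W}$ and $\widetilde{\Phi}|_{\Ac(W,F,\psi,\xi,\alpha)}=\Phi$; one then checks that $\widetilde{\Phi}$ is a $U$-comodule algebra map, and the rigidity you invoke (essentially the one-dimensionality of the $\Pc_{i,f}$) forces $\widetilde{\Phi}$ to be the identity, whence $\Phi$ itself is the identity and all data coincide at once. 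This extension trick collapses your steps two and three into one and sidesteps exactly the bookkeeping you flag as the main obstacle (showing $W=W'$ as \emph{subspaces} rather than just as comodules). Your route has the compensating virtue of making the role of each invariant---Loewy degree zero, graded degree one, the relations---explicit; note, though, that in your step three the individual $v_i$ need not lie in $\Ac(W,\ldots)$ when $W\subsetneq V$, so you would either have to work with a basis of $W$ adapted to the isotypic decomposition, or reduce to the case $W=V$, which is precisely what the paper's extension does.
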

\pf Let $\Phi: \Ac(W,F, \psi, \xi, \alpha)\to \Ac(W',F', \psi', \xi', \alpha')$ be an isomorphism of $U$-comodule algebras. The map $\Phi$ induces an isomorphism between $\ku_\psi F$ and $\ku_{\psi'} F'$ that must be the identity, thus $F$ is equal to $F'$ and $\psi=\psi'$ in $H^2(F, \ku^{\times})$.

\medbreak

Let $\widetilde{W}\in {}^{\ku\Gamma}_{\ku F}\Mo$ be a complement of $W$ in $V$, that is $V=W\oplus \widetilde{W}$. Let us define a map $\widetilde{\Phi}:\Ac(V,F, \psi, \xi, \alpha)\to \Ac(V,F, \psi, \xi', \alpha')$ such that $\widetilde{\Phi}(a)=\Phi(a)$ whenever $a\in \Ac(W,F, \psi, \xi, \alpha)$.

It is enough to define $\widetilde{\Phi}$ on $V$ and $\{e_f: f\in F\}$ since $\Ac(V,F, \psi, \xi, \alpha)$ is generated as an algebra by these elements. Set
$$\widetilde{\Phi}(w)=\Phi(w), \quad  \widetilde{\Phi}(u)= u,\quad \widetilde{\Phi}(e_f)=\Phi(e_f),$$
for any $w\in W$, $u\in \widetilde{W}$, $f\in  F$. It is straightforward to prove that $\widetilde{\Phi}$ is an $U$-comodule algebra map, and necessarily  $\widetilde{\Phi}$ is the identity map, whence $\Phi$ is the identity and the Lemma follows.
\epf

\pf[Proof of Theorem \ref{morita-equivariant}] Let us assume that  $\Ac=\Ac(W,F, \psi, \xi, \alpha)$ and $\Ac'=\Ac(W',F', \psi', \xi', \alpha')$ are equivariantly Morita equivalent. Thus there exists an equivariant Morita context $(P,Q,f,g)$, see \cite{AM}. That is $P\in {}_{\Ac'}^U\Mo_{\Ac}$, $Q\in {}_{\Ac}^U\Mo_{\Ac'}$ and $f:P\ot _{\Ac} Q\to \Ac'$, $g:Q\ot_{\Ac'} P\to \Ac$ are bimodule isomorphisms and $\Ac'\simeq \End_{\Ac}(P)$ as comodule algebras, where the comodule structure on $\End_{\Ac}(P)$ is given in \eqref{h-comod}.

\medbreak

Let us denote by $\delta:P\to U\otk P$ the coaction. Consider the filtration on $P$ given by $P_i=\delta^{-1}(U_i\otk P)$ for any $i=0\dots m$. This filtration is compatible with the Loewy filtration on $\Ac$, that is $P_i\cdot \Ac_j\subseteq P_{i+j}$ for any $i,j$ and for any $n=0\dots m$, $\delta(P_n)\subseteq \sum_{i=0}^n \, U_i\otk P_{n-i}$.

The space $P_0\cdot \Ac$ is a subobject of $P$ in the category ${}^U\Mo_{\Ac}$, thus we can consider the quotient $\overline{P}= P/ P_0\cdot \Ac$. Let us denote by $\overline{\delta}$ the coaction of $\overline{P}$. Clearly $\overline{P}_0=0$, therefore $\overline{P}=0$. Indeed, if $\overline{P}\neq 0$ there exists an element $q\in \overline{P}_n$ such that $q\notin \overline{P}_{n-1}$, but $\overline{\delta}(q)\subseteq \sum_{i=0}^n \, U_i\otk \overline{P}_{n-i}$. Since $\overline{P}_0=0$ then $\overline{\delta}(q)\in U_{n-1}\otk \overline{P}$ which contradicts the assumption. Hence $P=P_0\cdot \Ac$.

\medbreak

Since $P_0\in  {}^{\ku \Gamma}\Mo_{\ku_{\psi} F}$ then by Lemma \ref{equivalencia-cocentral} there exists an object $N\in {}^C\Mo$, $C=\ku \Gamma/ \ku \Gamma(\ku F)^+$ such that   $P_0\simeq N\otk \ku_{\psi} F$ as objects in ${}^{\ku \Gamma}\Mo_{\ku_{\psi} F}$.
The right $\ku_{\psi} F$-module structure on $N\otk \ku_{\psi} F$ is the regular action on the second tensorand and the left $\ku \Gamma$-comodule structure is given by $\delta:N\otk \ku_{\psi} F\to \ku \Gamma \otk N\otk \ku_{\psi} F$, $\delta(v\ot e_f)=v\_{-1}f\ot v\_0 \ot e_f$, $v\in N$, $f\in F$. Here we are identifying the quotient $C$ with $\ku \Gamma/F$.
Observe that $P= (N\ot 1)\cdot \Ac$. It is not difficult to prove that the action $(N\ot 1)\ot \Ac\to P$ is injective, thus $\dim P= \dim N\, \dim \Ac$. In a similar way one may prove that $\dim Q= s\,\dim \Ac'$ for some $s\in \Na$.

\medbreak

If $\dim N=1$ then there exists an element $g\in \Gamma$ and a non-zero element $v$ such that $\delta(v)=g\ot v$ and $P\simeq v\cdot\Ac$, where the left $U$-comodule structure is given by $\delta(v\cdot a)=ga\_{-1}\ot v\cdot a\_0$, for all $a\in\Ac$. In this case the map  $\varphi:g\Ac g^{-1}\to \End_{\Ac}(P)$ given by
$$ \varphi(gag^{-1})(v\cdot b)=v\cdot ab,$$
for all $a, b\in \Ac$ is an isomorphism of $U$-comodule algebras. Hence $\Ac'\simeq \Ac$. Thus, the proof of the Theorem follows from Lemma \ref{iso-alg} once we prove that $\dim N=1$.

Using Theorem \ref{freeness} there exists $t, s\in\Na$ such that $P^t$ is a free right $\Ac$-module, \emph{i.e}. there is a  vector space $M$ such that $P^t\simeq M\otk \Ac$, hence
\begin{align}\label{dimension1} t\, \dim N=\dim M.
\end{align}
Since $P\ot _{\Ac} Q\simeq \Ac'$ then $P^t\ot _{\Ac} Q\simeq M\otk Q\simeq \Ac'^t$, then $\dim M\dim Q=s\,\Ac' \dim M= t\dim \Ac'$ and using \eqref{dimension1} we obtain that $s\, \dim N=1$ whence $\dim N=1$ and the Theorem follows.
\epf

\section{A correspondence for twist equivalent Hopf
algebras}\label{twist-corresp} We shall present an explicit
correspondence between module categories over twist equivalent
Hopf algebras. For this we shall use the notion of biGalois
extension. A $(L,H)$-biGalois extension $B$, for two Hopf algebras
$L$, $H$ is a right $H$-Galois structure and a left $L$-Galois
structure on $B$ such that the coactions make $B$ an $(L,
H)$-bicomodule. For more details on this subject we refer to
\cite{S}.

\medbreak

Let $L$, $H$ be finite-dimensional Hopf algebras and $B$ a
$(L,H)$-biGalois extension. We denote by $\widetilde{B}$ the
$(H,L)$-biGalois extension with underlying algebra $B^{\op}$, and
comodule structure given as in \cite[Theorem 4.3]{S}. This new
biGalois extension satisfies that $B\square_H \widetilde{B}\simeq
L$ as $(L,L)$-biGalois extensions and $\widetilde{B}\square_H B
\simeq H$ as $(H,H)$-biGalois extensions. Here $\square_H$ denotes the cotensor product over $H$.

\medbreak

Let us recall that a Hopf 2-cocycle for $H$ is a  map $\sigma:
H\otk H\to \ku$, invertible with respect to convolution,  such
that for all $x,y, z\in H$
\begin{align}\label{2-cocycle}
\sigma(x\_1, y\_1)\sigma(x\_2y\_2, z) &= \sigma(y\_1,
z\_1)\sigma(x, y\_2z\_2),
\\
\label{2-cocycle-unitario} \sigma(x, 1) &= \varepsilon(x) =
\sigma(1, x).
\end{align}
Using this cocycle there is a new Hopf algebra structure
constructed over the same coalgebra $H$ with the product described
by
$$
x._{[\sigma]}y = \sigma(x_{(1)}, y_{(1)}) \sigma^{-1}(x_{(3)},
y_{(3)})\, \, x_{(2)}y_{(2)}, \qquad x,y\in H.
$$
This new Hopf algebra is denoted by $H^{\sigma}$. If $K$ is a left
$H$-comodule algebra, then we can define a new product in $K$ by
\begin{align}\label{sigma-product} a._{\sigma}b = \sigma(a_{(-1)},
b_{(-1)})\, a_{(0)}.b_{(0)},
\end{align}
$a,b\in K$. We shall denote by $K_{\sigma}$ this new left comodule
algebra. We shall say that the cocycle $\sigma$ is
\emph{compatible} with $K$ if for any $a, b\in K$, $\sigma(a\_2,
b\_2)\, a\_1 b\_1\in K$. In that case we shall denote by
${}_{\sigma}K$ the left comodule algebra with underlying space $K$
and algebra structure given by
\begin{align}\label{sigma-product2} a{}_{\sigma}.b = \sigma(a_{(-1)},
b_{(-1)})\, a_{(0)}.b_{(0)}\;\;\; a,b\in K.
\end{align}
The algebra $ H_\sigma$ is a left $H$-comodule algebra with
coaction given by the coproduct of $H$ and it is a $(H^{\sigma},
H)$-biGalois extension and ${}_\sigma H$ is a $(H,
H^{\sigma})$-biGalois extension.

\medbreak

If $\lambda:\Ac \to H\otk \Ac$ is a left $H$-comodule algebra then
$B\square_H \Ac$ is a left $L$-comodule algebra. The left coaction
is the induced by the left coaction on $B$ with the following
algebra structure, if $\sum x\ot a, \sum y\ot b \in B\square_H
\Ac$ then
$$(\sum x\ot a)(\sum y\ot b):=\sum xy\ot ab.$$
A direct computation shows that $B\square_H \Ac$ is a left
$L$-comodule algebra.

\begin{prop}\label{bigalois-c} The following assertions hold.
\begin{itemize}
    \item[1.] If $\Ac$ is right $H$-simple, then $B\square_H \Ac$
    is right $L$-simple.
    \item[2.] If $\Ac\sim_M \Ac'$ then $B\square_H \Ac\sim_M B\square_H \Ac'.$
    \item[3.] If $\sigma:H\otk H\to \ku$ is an invertible 2-cocycle and
    $L=H^{\sigma}$, $B=H_\sigma$ then $ B\square_H \Ac\simeq  \Ac_\sigma$.
\item[4.] If $K\subseteq H$ is a left coideal subalgebra,
$\tau:H\otk H\to \ku$ is an invertible 2-cocycle compatible with
$K$, $\sigma:H\otk H\to \ku$ is an invertible 2-cocycle and
$B=H_\sigma$ then $B\square_H \big({}_\tau\! K\big)\simeq \big(
{}_\tau K\big)_\sigma$.
\end{itemize}
As a consequence we obtain that the application $\Ac\to B\square_H
\Ac$ gives a explicit bijective correspondence between
indecomposable exact module categories over $\Rep(H)$ and over
$\Rep(L)$.
\end{prop}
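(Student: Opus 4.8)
The plan is to deduce parts (1) and (2) from formal properties of the cotensor functor attached to a biGalois extension, to prove (3) (and hence (4)) by an explicit identification, and then to assemble the final bijection. The key input, which I would take from \cite{S}, is that a biGalois object over the field $\ku$ is faithfully coflat as a comodule; it follows that $B\square_H(-)$, regarded as a functor from left $H$-comodule vector spaces (resp. algebras, modules over a comodule algebra, equivariant Morita contexts) to the corresponding left $L$-comodule objects, is exact and faithful, carries subalgebras to subalgebras and (co)stable ideals to (co)stable ideals, commutes with relative tensor products and with taking coinvariants, and behaves associatively for iterated (mixed) cotensor products; moreover, using the $(H,L)$-biGalois extension $\widetilde{B}$ together with $B\square_H\widetilde{B}\simeq L$ and $\widetilde{B}\square_L B\simeq H$, the functor $\widetilde{B}\square_L(-)$ is a quasi-inverse of $B\square_H(-)$.

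For (1), given a nonzero right $L$-costable right ideal $J\subseteq B\square_H\Ac$, I would observe that $\widetilde{B}\square_L J$ is a right $H$-costable right ideal of $\widetilde{B}\square_L(B\square_H\Ac)\simeq(\widetilde{B}\square_L B)\square_H\Ac\simeq H\square_H\Ac\simeq\Ac$: it is nonzero since $\widetilde{B}\square_L(-)$ is faithful, costable since this functor intertwines coactions, and an ideal since it preserves ideals. Right $H$-simplicity of $\Ac$ then forces $\widetilde{B}\square_L J=\Ac$, and applying $B\square_H(-)$ (with $B\square_H\widetilde{B}\simeq L$) returns $J=B\square_H\Ac$. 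The same bookkeeping gives $(B\square_H\Ac)^{\co L}\simeq({}^{\co L}B)\square_H\Ac\simeq\Ac^{\co H}$, so trivial coinvariants are preserved as well. For (2), by Proposition \ref{eq1} and the equivariant Morita context description used in \cite{AM}, the relation $\Ac\sim_M\Ac'$ is witnessed by a quadruple $(P,Q,f,g)$ with $P\in{}_{\Ac'}^{H}\Mo_{\Ac}$, $Q\in{}_{\Ac}^{H}\Mo_{\Ac'}$ and bimodule isomorphisms $f\colon P\ot_{\Ac}Q\xrightarrow{\sim}\Ac'$, $g\colon Q\ot_{\Ac'}P\xrightarrow{\sim}\Ac$; applying $B\square_H(-)$ to this quadruple yields $(B\square_H P,\,B\square_H Q,\,B\square_H f,\,B\square_H g)$ with $B\square_H P\in{}_{B\square_H\Ac'}^{L}\Mo_{B\square_H\Ac}$, and the one point to check is that $B\square_H f$ and $B\square_H g$ are still isomorphisms, which reduces to the natural isomorphism $B\square_H(M\ot_{\Ac}N)\simeq(B\square_H M)\ot_{B\square_H\Ac}(B\square_H N)$ — precisely the place where faithful coflatness does the work. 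Hence $B\square_H\Ac\sim_M B\square_H\Ac'$.

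For (3), the algebra $B=H_\sigma$ has the same coalgebra as $H$, multiplication $x\cdot_\sigma y=\sigma(x_{(1)},y_{(1)})\,x_{(2)}y_{(2)}$ and right $H$-coaction $\Delta$; the linear map $\Ac_\sigma\to H_\sigma\square_H\Ac$, $a\mapsto a_{(-1)}\ot a_{(0)}$, is a bijection with inverse $\varepsilon\ot\id$, and a short computation using the coassociativity of the coaction of $\Ac$ shows that it carries the product $a\cdot_\sigma b=\sigma(a_{(-1)},b_{(-1)})\,a_{(0)}b_{(0)}$ of $\Ac_\sigma$ to the product $\sum\big(a_{(-1)}\cdot_\sigma b_{(-1)}\big)\ot a_{(0)}b_{(0)}$ of $H_\sigma\square_H\Ac$; being manifestly $H^\sigma$-colinear, it is an isomorphism of $L$-comodule algebras. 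Part (4) is then the special case $\Ac={}_\tau K$ of (3), the hypothesis that $\tau$ be compatible with $K$ being exactly what makes ${}_\tau K$ a left $H$-comodule algebra to which (3) applies.

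For the concluding assertion: by Theorem \ref{mod-overhopf} and Proposition \ref{eq1}, equivalence classes of indecomposable exact module categories over $\Rep(H)$ (resp. $\Rep(L)$) are in bijection with equivariant Morita equivalence classes of right $H$-simple (resp. $L$-simple) left comodule algebras with trivial coinvariants; by (1) the assignment $\Ac\mapsto B\square_H\Ac$ preserves this class of comodule algebras, by (2) it descends to Morita classes, and it is a bijection because $\widetilde{B}\square_L(-)$ — which satisfies the analogues of (1) and (2), $\widetilde{B}$ being again a biGalois extension — is a two-sided inverse up to isomorphism of comodule algebras (using the associativity of $\square$ and $H\square_H\Ac\simeq\Ac$, $L\square_L\Ac'\simeq\Ac'$). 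Transporting through Theorem \ref{mod-overhopf} yields the stated bijection ${}_{\Ac}\Mo\leftrightarrow{}_{B\square_H\Ac}\Mo$. The main obstacle is the first paragraph: establishing exactness, faithfulness and the several commutations of $B\square_H(-)$ out of faithful coflatness of biGalois objects. This is standard \cite{S} but is the technical heart; granting it, parts (1), (2) and the conclusion are formal and parts (3), (4) are a direct computation.
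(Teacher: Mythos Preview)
Your proof is correct and follows essentially the same route as the paper, only with considerably more care. The paper's proof is extremely terse: for (1) it records merely that $B\square_H(-)$ carries costable ideals to costable ideals (leaving implicit the use of $\widetilde{B}\square_L(-)$ as inverse, which you spell out), and for (3), (4) it says only ``straightforward computation''---precisely the map $a\mapsto a_{(-1)}\ot a_{(0)}$ you wrote down. The one genuine difference is in (2): the paper invokes the $\End$ characterization of Proposition~\ref{eq1} directly and asserts the natural isomorphism $B\square_H\End_{\Ac}(P)\simeq\End_{B\square_H\Ac}(B\square_H P)$, whereas you transport the full Morita context and need instead $B\square_H(M\ot_{\Ac}N)\simeq(B\square_H M)\ot_{B\square_H\Ac}(B\square_H N)$. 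Both identities rest on the same faithful coflatness input from \cite{S}, so this is a packaging difference rather than a substantive one; your version has the mild advantage of making the compatibility with the inverse $\widetilde{B}\square_L(-)$ and the final bijection manifestly symmetric.
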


\pf 1. If $I\subseteq \Ac$ is a right ideal $H$-costable then
$B\square_H I$ is a right ideal $L$-costable of $B\square_H \Ac$.

2. Let $P\in {}^{H\!}\Mo_{\Ac}$ such that $\Ac'\simeq
\End_{\Ac}(P)$ as comodule algebras. The object $B\square_H P$
belongs to the category ${}^{L\!}\Mo_{B\square_H\Ac}$. The result
follows since there is a natural isomorphism
$B\square_H\End_{\Ac}(P)\simeq \End_{B\square_H \Ac}(B\square_H
P)$.

3. and 4. follow by a straightforward computation.\epf

\subsection{BiGalois extensions for quantum linear spaces}

Let $\theta\in \Na$ and $\Gamma$ be a finite Abelian group,
$(g_1,\dots, g_{\theta},\chi_1, \dots,\chi_{\theta})$ be a datum
of a quantum linear space, $V=V(g_1,\dots, g_{\theta},\chi_1,
\dots,\chi_{\theta})$ and $\qs$ the quantum linear space
associated to this data. Let $U= \qs\#\ku\Gamma$.

Let $\Do=(\mu, \lambda)$ be a compatible datum for $\qs$ and
$\Gamma$, and $H=\Ac(\Gamma,\qs,\Do)$ be the Hopf algebra as
described in section \ref{liftings}. We shall present a
$(H,U)$-biGalois object.

The pair $(-\mu,-\lambda)$ is a compatible comodule algebra datum
with respect to $\qs$ and the trivial 2-cocycle. In this case the
left $U$-comodule algebra $\Ac(V,\Gamma,1,-\mu,-\lambda)$ is also
a right $H$-comodule algebra with structure
$\rho:\Ac(V,\Gamma,1,-\mu,-\lambda)\to
\Ac(V,\Gamma,1,-\mu,-\lambda)\otk H$ determined by
$$ \rho(e_g)=e_g\ot g,\quad \rho(v_i)=v_i\ot 1+ e_{g_i}\ot a_i,
\;\; g\in \Gamma,\;\;  i=1,\dots,\theta.$$

The following result seems to be part of the folklore.

\begin{prop}\label{cocy-def} The algebra $\Ac(V,\Gamma,1,-\mu,-\lambda)$ with the above
coactions is a $(H,U)$-biGalois object.
\end{prop}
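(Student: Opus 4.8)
The plan is to verify that $B=\Ac(V,\Gamma,1,-\mu,-\lambda)$, equipped with the left $U$-coaction $\lambda$ and the right $H$-coaction $\rho$, satisfies the three requirements of a biGalois object: that $(\lambda,\rho)$ make $B$ an $(U,H)$-bicomodule, that $\lambda$ is a left $U$-Galois extension (i.e. $B^{\co U}=\ku$ and the left canonical map is bijective), and that $\rho$ is a right $H$-Galois extension (i.e. $B^{\co H}=\ku$ and the right canonical map is bijective). First I would check that $\rho$ is a well-defined algebra map: since $B$ is generated by the $e_g$ and the $v_i$, it suffices to check that $\rho$ respects the defining relations \eqref{relations1}--\eqref{relations3} (with $\psi=1$, $\xi=-\mu$, $\alpha=-\lambda$), using the relations \eqref{lifting-qs1} in $H=\Ac(\Gamma,\qs,\Do)$; this is the calculation that forces the signs $-\mu,-\lambda$. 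The bicomodule compatibility $(\id\otk\rho)\lambda=(\lambda\otk\id)\rho$ is then immediate on generators, since $\lambda$ touches only the "$U$-side" tensorands $x_i,g$ and $\rho$ only the "$H$-side" tensorands $a_i,g$, and they act on disjoint slots.

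Next I would treat the left $U$-Galois property. By Proposition \ref{propertiesA1}(1), $\Ac(V,\Gamma,1,-\mu,-\lambda)$ is already a Hopf--Galois extension of $\ku$ over $U$: that proposition gives $B^{\co U}=\ku$ and bijectivity of the canonical map $\beta(a\otk b)=a\_{-1}\otk a\_0 b$, via the explicit formulas $\beta(e_g\otk e_{g^{-1}})=g\otk 1$ and $\beta(v_i\otk 1-e_{g_i}\otk e_{g_i^{-1}}v_i)=x_i\otk 1$, which show the generators $g\otk 1$ and $x_i\otk 1$ of $U\otk B$ lie in the image. So the left side is free. For the right $H$-Galois property I would argue symmetrically: first $B^{\co H}=\ku$, which follows because an element annihilated by $\rho-(\id\otk\eta\varepsilon)$ must, in the Loewy filtration of Proposition \ref{propertiesA1}(2), have trivial components in all positive degrees (the top-degree part would produce a nonzero $a_i$-term or $g$-term in the $H$-slot) and degree-zero part a scalar; then I would exhibit explicit preimages under the right canonical map $\beta'(a\otk b)=ab\_0\otk b\_1$, namely $\beta'(e_{g^{-1}}\otk e_g)=1\otk g$ and $\beta'(1\otk v_i-v_i e_{g_i^{-1}}\otk e_{g_i})=1\otk a_i$, which generate $B\otk H$ as an algebra over $B\otk 1$.

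An alternative, cleaner route is to identify $B$ with a cocycle twist. A lifting of a quantum linear space is a cocycle deformation of its associated graded Hopf algebra, so there is a Hopf $2$-cocycle $\sigma:U\otk U\to\ku$ with $H\simeq U^{\sigma}$; then $U_{\sigma}$ is a $(U^{\sigma},U)$-biGalois object by the general theory recalled just before Proposition \ref{bigalois-c}, and one checks that the comodule algebra $U_{\sigma}$, presented by generators and relations, is isomorphic to $\Ac(V,\Gamma,1,-\mu,-\lambda)$ with precisely the coactions $\lambda,\rho$ above — the $2$-cocycle twist of the multiplication on $U=\qs\#\ku\Gamma$ converts the relations $x_i^{N_i}=0$, $x_ix_j=q_{ij}x_jx_i$ into the deformed relations \eqref{relations2}--\eqref{relations3} with parameters $-\mu,-\lambda$. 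Invoking Proposition \ref{bigalois-c}(3) then gives the biGalois statement for free. I would present the direct verification as the main proof and mention the twist interpretation as a remark.

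The main obstacle is the bookkeeping in checking that $\rho$ is well defined, i.e. compatible with the relations \eqref{relations2} and \eqref{relations3}: one must expand $\rho(v_i)^{N_i}$ and $\rho(v_iv_j-q_{ij}v_jv_i)$ using the iterated coproduct-type formula for $\rho$ on powers of $v_i$ (the quantum binomial theorem, since $v_i\otk 1$ and $e_{g_i}\otk a_i$ $q_i$-commute inside $B\otk H$), and see that the cross terms cancel and the surviving terms reproduce exactly $-\mu_i(1-g_i^{N_i})$ and $-\lambda_{ij}(1-g_ig_j)$ transported through $\rho$, matching \eqref{lifting-qs1}. Establishing $B^{\co H}=\ku$ via the Loewy filtration is the other point requiring a little care, but it is routine given Proposition \ref{propertiesA1}(2). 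Everything else — bicomodule compatibility, and surjectivity of the canonical maps from the explicit preimages — is a short computation.
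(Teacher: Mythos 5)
Your proposal is correct and is precisely the direct verification that the paper compresses into ``Straightforward'': check $\rho$ on the defining relations, bicomodule compatibility on generators, the left $U$-Galois property via Proposition \ref{propertiesA1}(1), and the right $H$-Galois property by the symmetric computation (your cocycle-twist route, by contrast, would be circular here, since this proposition is what establishes that $H$ is a cocycle deformation of $U$). One small slip: your candidate preimage of $1\ot a_i$ under $\beta'$ actually maps to $v_i\ot 1+e_{g_i}\ot a_i-v_i\ot g_i$; the correct element is $e_{g_i^{-1}}\ot v_i-e_{g_i^{-1}}v_i\ot 1$, whose image under $\beta'$ is $1\ot a_i$.
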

\pf Straightforward. \epf

\end{document}